\documentclass[a4paper, 10pt, twoside, notitlepage]{amsart}
\usepackage{amsmath,amscd}
\usepackage{amssymb}
\usepackage{amsthm}
\usepackage{comment}
\usepackage{graphicx, xcolor}

\usepackage{mathrsfs}
\usepackage[ocgcolorlinks, linkcolor=blue]{hyperref}

\usepackage{bm}
\usepackage{bbm}
\usepackage{url}

\usepackage[utf8]{inputenc}
\usepackage{mathtools,amssymb}
\usepackage{esint}
\usepackage{tikz}
\usepackage{dsfont}
\usepackage{relsize}
\usepackage{url}
\usepackage{xcolor}
\usepackage{graphicx}
\usepackage{mathrsfs}
\usepackage[shortlabels]{enumitem}
\usepackage{lineno}
\usepackage{amsmath}
\usepackage{enumitem}
\usepackage{amsthm} 
\usepackage{verbatim}
\usepackage{dsfont}
\numberwithin{equation}{section}

\newcommand{\s}{\hspace{0.5pt}}

\newcommand{\ccdot}{\,\cdot\,}

\newcommand{\wt}{\widetilde}

\allowdisplaybreaks

\graphicspath{{images/}}

\newtheorem{theorem}{Theorem}[section]
\newtheorem{lemma}[theorem]{Lemma}
\newtheorem*{lemma*}{Lemma}

\newtheorem{remark}[theorem]{Remark}

\title[An inverse source problem for a quasilinear elliptic equation]{An inverse source problem for a quasilinear elliptic equation}

\author[T. Liimatainen]{Tony Liimatainen}
\address{Department of Mathematics and Statistics, University of Jyv\"askyl\"a, Jyv\"askyl\"a, Finland}
\curraddr{}
\email{tony.t.liimatainen@jyu.fi}

\author[S. Jaiswal]{Shubham Jaiswal}
\address{Department of Mathematics and Statistics, University of Jyv\"askyl\"a, Jyv\"askyl\"a, Finland}
\curraddr{}
\email{shubham.s.jaiswal@jyu.fi}

\keywords{An inverse source problem for a quasilinear elliptic equation}
\subjclass[2020]{35R30, 35J25, 35J60, 35J96}

\newcommand{\C}{{\mathbb C}}
\newcommand{\R}{{\mathbb R}}

\newcommand{\eps}{\epsilon}

\newcommand {\p} {\partial}

\newcommand{\LC}{\left(}
\newcommand{\RC}{\right)}

\newcommand{\abs}[1]{\left\lvert #1 \right\rvert}

\begin{document}
\maketitle

\section*{Abstract}
We initiate the study of inverse source problems for quasilinear elliptic equations of the form
\[
\left\{
\begin{array}{ll}
\nabla \cdot (\gamma(x,u,\nabla u) \nabla u) = F & \text{in } \Omega, \\
u = f & \text{on } \partial\Omega,
\end{array}
\right.
\]
where $\Omega \subset \mathbb{R}^n$, $n \geq 2$, is a simply connected bounded domain. We consider the specific nonlinearity $\gamma(x,u,\nabla u) = \sigma(x) + q(x) u$, with $q$ assumed to be known. By exploiting the nonlinearity to break the gauge invariance of the  problem, we establish unique recovery of both $\sigma$ and $F$ from the associated Dirichlet-to-Neumann (DN) map under the structural conditions $q$ and $\nabla(\sigma/q)$ are nowhere vanishing in $\overline\Omega$. In the absence of these conditions, in particular in the linear case, we demonstrate that the inverse problem admits a gauge obstructing the uniqueness.

We use higher order linearizations to obtain a complicated coupled system for the unknowns. The complexity of this system arises in part from the gauge freedom of the linearized equation, which is new in this context. We solve the system by constructing suitable complex geometric optics solutions and applying the unique continuation principle for nonlinear elliptic systems. We anticipate that the solution method developed here will prove useful in other inverse problems as well.

{\color{black}
\section{Introduction}
	
	Let $\Omega \subset\R^n$ be a simply connected bounded domain with $C^\infty$-smooth boundary $\p \Omega$ with $n\geq 2$. In this paper we consider quasilinear elliptic equations of the form
	\begin{equation}\label{eq:quasilin_equation}
\begin{cases}
\begin{array}{ll}
     \nabla \cdot ((\sigma + qu) \nabla u)=F & in\ \Omega \\
      u=f & on\ \partial \Omega 
    
\end{array}
\end{cases}
\end{equation}
	for $\alpha \in (0, 1)$, $ \sigma \in C^{2,\alpha}(\overline{\Omega})$, $q\in C^{2,\alpha}(\overline\Omega)$.  
    
Let us assume for now that the boundary value problem \eqref{eq:quasilin_equation} is well-posed on an open subset $\mathcal N \subset C^{2,\alpha}(\partial \Omega)$. In this case, the Dirichlet-to-Neumann map (DN map) is defined by the usual assignment
\begin{equation}
\Lambda_{\sigma,F} : \mathcal N
 \to C^{1,\alpha}(\partial \Omega), \quad f \mapsto \partial_\nu u_f|_{\partial \Omega},
\end{equation}
where $\nu$ denotes the unit outer normal on $\partial \Omega$. In Theorem \ref{Well-posedness} we show that if there is $f_0 \in C^{2,\alpha}(\partial \Omega)$ such that the equation \eqref{eq:quasilin_equation} admits a solution $u_0 \in C^{2,\alpha}(\Omega)$ with $u_0|_{\partial \Omega} = f_0$, and

\begin{equation}\label{eq:assumption_for_coefficients}
 \sigma + qu \geq c>0 ,\\\ x \in \overline{\Omega}\ \ \ \ \ \ \ \
\text{and} \ \ \ \ \ \ \ \
 \nabla \cdot (q \nabla u) \leq0 ,\ \ \  x \in \overline{\Omega}.
\end{equation}
\\
then there is an open neighborhood $\mathcal N \subset C^{2,\alpha}(\partial \Omega)$ of $f_0$ where \eqref{eq:quasilin_equation} is well-posed in the following sense: For each $f \in \mathcal N$ there exists a solution $u_f$ to \eqref{eq:quasilin_equation} with $u_f|_{\partial \Omega} = f$ and the solution $u_f$ is unique in a fixed neighborhood of $u_0 \in C^{2,\alpha}(\Omega)$.


Consider the equation \eqref{eq:quasilin_equation} for two sets $(\sigma,F)$ and $(\tilde\sigma, \widetilde F)$ of coefficients. Let $\Lambda=\Lambda_{\sigma,F}$ and $\widetilde \Lambda=\Lambda_{\tilde \sigma,\tilde F}$ be the corresponding DN maps defined on $\mathcal{N}_1\subset C^{2,\alpha}(\p \Omega)$ and $\mathcal{N}_2\subset C^{2,\alpha}(\p \Omega)$, respectively. When we write
\[
 \Lambda(f)=\widetilde\Lambda(f) \text{ for any } f\in \mathcal{N},
\]
we especially assume that $\mathcal N\subset \mathcal N_1\cap \mathcal N_2$. 
   
   \smallskip
   
 \begin{itemize}
 	\item   \textbf{Inverse source problem:} Can one uniquely recover 
$\sigma$ and 
$F$ from the Dirichlet-to-Neumann map 
$\Lambda_{\sigma,F}$?
 \end{itemize}

Equations of the form \eqref{eq:quasilin_equation} arise naturally in a variety of physical contexts where the effective coefficient depends linearly on the field variable itself. In heat transfer, for instance, many materials exhibit temperature dependent thermal conductivity, leading to a quasilinear steady state heat equation with internal heat sources $F$. Similarly, in subsurface hydrology, permeability can vary with fluid pressure, and in nonlinear electrostatics, conductivity may depend on the electric potential. Such models also appear in biological tissues where diffusion coefficients depend on concentration.  In electrical impedance tomography (EIT)  the simplest nonlinear corrections to linear conductivity models are captured by this structure. 

In all these applications, the inverse problem of recovering both the material properties and the internal sources from boundary measurements is of paramount importance. However, as demonstrated below for the linear case, such inverse source problems typically suffer from a gauge invariance that prevents unique recovery, motivating the investigation of whether the nonlinearity can be exploited to break this gauge and restore uniqueness.


	\begin{remark}\label{remark:counterexample} 
	Let us consider the inverse source problem for the linear equation
	
	\begin{align}\label{l1}
		\begin{cases}
			\nabla\cdot(\sigma \nabla u)=F &\text{ in }\Omega,\\
			u=f& \text{ on }\p \Omega.
		\end{cases}
	\end{align}
	The question in this inverse problem is whether the Dirichlet-to-Neumann map
\[
\Lambda_F : C^{\infty}(\partial\Omega) \to C^{\infty}(\partial\Omega)
\]
uniquely determines the function $F$.
  In general, the answer is negative for the reason outlined below. Let $u$ solve \eqref{l1} and let $\varphi$ be an arbitrary $C^2$-function satisfying $\left. \varphi \right|_{\p \Omega} =\left. \p_\nu \varphi \right|_{\p \Omega} =0$ and define
     \begin{align}\label{eq_r1}
     	\tilde u:=u+\varphi.
     \end{align}
	Therefore, we have $\LC \tilde u|_{\p \Omega}, \p_\nu \tilde u|_{\p \Omega}\RC=\LC u|_{\p \Omega}, \p_\nu u|_{\p \Omega}\RC$, and 
	\begin{align}\label{eq_r2}
		\begin{split}
			\nabla\cdot(\sigma \nabla \tilde u) =&\nabla\cdot(\sigma \nabla (u + \varphi))\\
            =&\nabla\cdot(\sigma \nabla u) + \nabla\cdot(\sigma \nabla \varphi)\\
			=&F+\nabla\cdot(\sigma \nabla \varphi)=\widetilde F.
		\end{split}
	\end{align}
	Hence, the functions $u$ and $\tilde u$ satisfy
\[
\nabla\cdot(\sigma \nabla u) = F 
\quad \text{and} \quad
\nabla\cdot(\sigma \nabla \tilde u) = \widetilde F,
\]
respectively. Since $u$ and $\tilde u$ have identical Cauchy data on $\partial\Omega$, 
the Dirichlet-to-Neumann maps coincide, that is,
\[
\Lambda_{F}(f) = \Lambda_{\widetilde F}(f)
\quad \text{on } \partial\Omega.
\]
It follows that the source term cannot, in general, be uniquely recovered from the Dirichlet-to-Neumann map. 
\end{remark}

As was observed originally in \cite{liimatainenLin2024} for an inverse source problem for a semilinear equation the gauge of the problem sometimes breaks, depending on the form of the nonlinearity. In this work we prove the following uniquenes result. 

\begin{theorem}\label{thm:Main theorem}
Let $\Omega \subset \mathbb{R}^n$ be a bounded and simply connected domain with $C^\infty$-smooth
boundary $\partial\Omega$, $n \ge 2$. Let 
$\sigma,\tilde\sigma, q,\tilde q, F,\widetilde F \in C^{\infty}(\overline{\Omega})$ and $f_0 \in C^{\infty}(\partial \Omega)$.  Assume the structural conditions \[q(x)\neq 0 \  \text{ and } \  \nabla(\sigma/q)(x) \neq 0, \quad \text{ for all } x  \in \overline{\Omega} \]
and further that 
\[
q=\tilde q  \text{ in } \overline{\Omega}
\]

Assume that there exists an open set $\mathcal{N} \subset C^{2,\alpha}(\partial\Omega)$
such that the Dirichlet-to-Neumann maps $\Lambda_{\sigma,F}$ and $\Lambda_{\tilde\sigma,\widetilde F}$ of the boundary value problems
\begin{equation}
\begin{cases}
\begin{array}{ll}
     \nabla \cdot ((\sigma + qu) \nabla u)=F & in\ \Omega \\
       u=f & on\ \partial \Omega 
\end{array} 
\end{cases}
\end{equation} and
\begin{equation}
\begin{cases}
\begin{array}{ll}
     \nabla \cdot ((\tilde\sigma + q\tilde u) \nabla \tilde u)=\widetilde F & in\ \Omega \\
       \tilde u=f & on\ \partial \Omega 
\end{array} 
\end{cases}
\end{equation}
satisfy
\[
\Lambda_{\sigma,F} = \Lambda_{\tilde\sigma,\widetilde F}
\quad \text{for all } f \in \mathcal{N}.
\] 
Then 
\[
\sigma= \tilde\sigma\quad \text{and} \quad F =\widetilde F \quad \text{in } \Omega.
\]
\end{theorem}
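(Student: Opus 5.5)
The plan is to use higher order linearization of the DN map around the family of background solutions $u_f$, $f\in\mathcal N$, given by Theorem \ref{Well-posedness}, and to keep track of the gauge explicitly rather than trying to remove it.

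\textbf{Step 1: the gauge function.} For $f\in\mathcal N$ let $u_f,\tilde u_f$ be the two solutions and set $\varphi_f:=\tilde u_f-u_f$. Since the Dirichlet data and the DN maps coincide, $\varphi_f|_{\partial\Omega}=\partial_\nu\varphi_f|_{\partial\Omega}=0$. This is exactly the gauge of Remark \ref{remark:counterexample}, now depending on $f$. Write $\delta:=\tilde\sigma-\sigma$ and subtract the two equations. Using $q=\tilde q$ this gives, for every $f\in\mathcal N$,
\begin{equation*}
\nabla\cdot\bigl(\delta\nabla u_f+(\sigma+\delta+qu_f)\nabla\varphi_f+q\varphi_f\nabla u_f+q\varphi_f\nabla\varphi_f\bigr)=\widetilde F-F .
\end{equation*}
The right-hand side does not depend on $f$. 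Hence differentiating in $f$ eliminates the unknown sources completely, and what remains is an identity involving only $\delta$, $\varphi_f$ and the derivatives of $\varphi_f$ in $f$.

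\textbf{Step 2: first and second linearizations.} Put $f=f_0+\epsilon_1 g_1+\epsilon_2 g_2$ and differentiate at $\epsilon=0$. Write $\gamma=\sigma+qu_0$, $\tilde\gamma=\tilde\sigma+q\tilde u_0$, and let $v_j,\tilde v_j$ be the first linearizations. These satisfy
\[
\nabla\cdot(\gamma\nabla v+vq\nabla u_0)=0,\qquad \nabla\cdot(\tilde\gamma\nabla \tilde v+\tilde vq\nabla \tilde u_0)=0,
\]
and their linearized DN maps agree. The difference $\psi=\tilde v-v$ again has vanishing Cauchy data, so this linearized problem carries a gauge of its own. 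The second linearization has source term $\nabla\cdot(q\nabla(v_1v_2))$, and its equal DN data give an integral identity. Integrating against solutions $w$ of the adjoint operator $\nabla\cdot(\gamma\nabla w)-q\nabla u_0\cdot\nabla w=0$, the identity reads
\[
\int_\Omega q\nabla(v_1v_2)\cdot\nabla w\,dx=\int_\Omega q\nabla(\tilde v_1\tilde v_2)\cdot\nabla\tilde w\,dx,
\]
with the tilded solutions tied to the untilded ones through the linear gauge relations.

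\textbf{Step 3: CGO solutions and the coupled system.} Using the Liouville-type substitution $v=\gamma^{-1/2}\cdot(\text{Schr\"odinger solution})$, build complex geometric optics solutions $v_j,w$ and their tilded counterparts. Insert them into the identity of Step 2 and into the linearized form of the Step 1 equation, and let the large parameter tend to infinity. I expect this to produce pointwise relations among $\delta$, $\varphi:=\varphi_{f_0}$, and the linearized gauge functions. From these I aim to derive a closed, possibly nonlinear, elliptic system for $(\delta,\varphi)$, together with zero Cauchy data for $\varphi$ and boundary determination of $\delta$ on $\partial\Omega$.

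\textbf{Step 4: unique continuation and conclusion.} Apply unique continuation for nonlinear elliptic systems to conclude $\delta=0$ and $\varphi=0$ near $\partial\Omega$, and then in all of $\Omega$. The structural hypotheses enter here. The condition $q\neq0$ lets me divide by $q$. The condition $\nabla(\sigma/q)\neq0$ is what makes the system genuinely determined. When $\nabla(\sigma/q)=0$ the two contributions $\delta\nabla u$ and $q\varphi\nabla u$ can cancel, which is precisely a surviving gauge. Once $\delta=0$ and $\varphi=0$, we have $\tilde u_{f_0}=u_{f_0}$, and the original equation gives $F=\widetilde F$.

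The main obstacle is Step 3. The linearized equation has its own gauge, so the CGO limits do not isolate $\delta$ directly. Extracting a system to which unique continuation applies is the delicate part. I would first try to express the linearized gauge $\psi$ through $\delta$ and $\varphi$ by comparing the two first-linearized equations.
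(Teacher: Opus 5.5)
\textbf{There is a genuine gap.} Your Step 3 is where the entire proof lives, and it is left as an expectation. As sketched, it cannot produce a closed system.

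The identity of Step 1 has the unknown right-hand side $\widetilde F-F$. Differentiating it in $f$ merely reproduces the two linearized equations. Comparing those gives $\widetilde L\psi=(L-\widetilde L)v$ with $\psi$ having zero Cauchy data, which is an Alessandrini-type identity. The most CGOs can extract from it is the known result for the linearized inverse problem ($Q=\widetilde Q$, $dA=d\widetilde A$), not a formula for $\psi$ in terms of $\delta$ and $\varphi$.

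Your proposed reduction is also not available. After dividing by $\gamma$ the drift is $(\nabla\gamma+q\nabla u_0)/\gamma$, and the Liouville substitution only removes $\nabla\gamma/\gamma$. The remainder $q\nabla u_0/\gamma$ is not a gradient, since $d\bigl(\tfrac{q}{\gamma}\,du_0\bigr)=-\tfrac{q^2}{\gamma^2}\,d(\sigma/q)\wedge du_0$, which is generically nonzero precisely under your structural hypothesis. So the linearized operator is a genuine magnetic Schr\"odinger operator $L_{A,Q}$, and its DN map determines $Q$, and $A$ only up to a gradient.

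The missing idea is to exploit exactly this. By the Nakamura--Sun--Uhlmann theorem and simple connectivity, $Q=\widetilde Q$ and $A-\widetilde A=\nabla\tilde\varphi$ for a \emph{single} function $\tilde\varphi$, which vanishes on $\partial\Omega$ by boundary determination. Hence the linearized gauge is multiplicative: $\widetilde V=e^{i\tilde\varphi}V$, and for the adjoint $\widetilde V_0=\frac{\Theta}{\widetilde\Theta}e^{-i\tilde\varphi}V_0$, where $\Theta=\sigma+qu_0$. The paper then proceeds as follows.
\begin{enumerate}
\item Substitute these relations into the second-order identity and use density of products of CGOs. This gives the pointwise identity $\nabla\cdot(q\nabla V_0)=e^{2i\tilde\varphi}\,\nabla\cdot\bigl(q\nabla(BV_0)\bigr)$ with $B=\frac{\Theta}{\widetilde\Theta}e^{-i\tilde\varphi}$.
\item Take $V_0\equiv 1$, which is allowed because the adjoint has no zeroth-order term. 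This forces $B\equiv 1$.
\item Take a CGO $V_0$ with $\nabla\cdot(q\nabla V_0)(x_0)\neq 0$. This forces $\tilde\varphi\equiv 0$. The existence of such $V_0$ is exactly where $\nabla(\sigma/q)\neq0$ is used, not a cancellation between $\delta\nabla u$ and $q\varphi\nabla u$.
\item Only now do $A=\widetilde A$ (after taking a divergence) and $Q=\widetilde Q$ give two second-order equations for $(\Theta-\widetilde\Theta,\,u_0-\tilde u_0)$, which is equivalent to your $(\delta,\varphi)$. The principal matrix is invertible, with determinant $-q/\Theta^2$, so unique continuation applies.
\end{enumerate}

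Finally, the zero Cauchy data of $\sigma-\tilde\sigma$ on $\partial\Omega$, which you take for granted, also needs an argument. The paper obtains it from $A=\widetilde A$ on $\partial\Omega$. It perturbs the background solution along a linearized solution with prescribed gradient at a boundary point (via Runge approximation) and uses $q\neq0$.
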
 
We record the following remarks regarding the assumptions in the theorem.

\begin{remark}\label{rem:q-nonzero-assumption}
$\bullet$ The assumption that \(q\neq 0\) $\forall x \in \overline{\Omega}$ ensures that the equation is genuinely nonlinear and is essential for our analysis. If \(q\) were to vanish on an open set, the equation would become linear there, and would lead to non-uniqueness in the inverse problem as illustrated in Remark \ref{remark:counterexample}.

$\bullet$  The other structural condition
\[
\nabla(\sigma/q)(x) \neq 0, \quad \text{ for all } x \in \overline{\Omega}
\]
 is also necessary in the general setting where also \(q\) is unknown. Indeed, if this condition was violated, one could multiply the equation
\[
\nabla\cdot\big((\sigma+qu)\nabla u\big)=F
\]
by the constant \(\sigma/q\) to obtain
\[
\nabla\cdot\big((\sigma^2/q+\sigma u)\nabla u\big)=\sigma F/q.
\]
This would imply that the coefficients \((\sigma,q,F)\) and \((\sigma^2/q,\sigma,\sigma F/q)\) yield the same Dirichlet-to-Neumann map, thereby obstructing unique recovery.

$\bullet$  The additional assumption \(q = \tilde q\) is technical and arises from the complexity of the proof. We expect that this restriction could be removed through a more refined analysis of the highly coupled system for the unknowns that appears in the proof, and by possibly considering higher-order linearizations  of the Dirichlet-to-Neumann map. However, as the analysis in the present case is already quite involved, we leave the treatment of the  general case to a future work.

$\bullet$  The assumption that $\Omega$ is simply connected is required to integrate the gauge condition arising from the linearized equation. The regularity assumptions on the coefficients can be reduced in the proof to finite regularity, though not quite to the minimal regularity required for the well-posedness result in Theorem \ref{Well-posedness}.

\end{remark}

\subsection{Earlier works}
The standard approach to inverse problems for nonlinear elliptic equations was initiated in \cite{isakov1993uniqueness_parabolic}, where the author linearized the nonlinear Dirichlet-to-Neumann map. This linearization reduced the inverse problem for a nonlinear equation to that of a linear equation, which could then be addressed using techniques for linear problems.

Subsequently, second-order linearizations, where the data depends on two independent parameters, were employed to solve inverse problems in works such as \cite{AYT2017direct,CNV2019reconstruction,KN002,sun1996quasilinear,sun2010inverse,sun1997inverse}.

For the case $F = 0$ in \eqref{eq:quasilin_equation} (so that $u \equiv 0$ is a solution), inverse problems for semilinear elliptic equations were recently studied in \cite{FO19,LLLS2019nonlinear}. A key novelty of these works is that nonlinearity is no longer viewed as a complication but rather as a beneficial tool. This approach originates from the seminal work \cite{KLU2018}, which treated inverse problems for nonlinear wave equations in Lorentzian spacetimes. By leveraging nonlinearity in this way, inverse problems for nonlinear equations have been solved in cases where the corresponding linear inverse problems remain open. This method is now commonly known as the \emph{higher-order linearization method}.

Following \cite{KLU2018,FO19,LLLS2019partial}, the literature on inverse problems for nonlinear equations based on the higher-order linearization method has grown substantially. Without aiming to be exhaustive, we mention that works such as \cite{LLLS2019partial,KU2019remark,KU2019partial,FLL2021inverse} have investigated inverse problems for semilinear elliptic equations with general nonlinearities, including in the case of partial data.
Furthermore, a survey of inverse problems for nonlinear partial differential equations is provided in \cite{L25}.

Inverse problems for quasilinear equations using higher-order linearization have been studied in \cite{KKU2022partial,CFKKU2021calderon,FKU2021inverse,LW23}. Meanwhile, \cite{CLLO2022inverse,Nur23,Nur24,ABN20,CLT24,Mu24} considered inverse problems for minimal surface equations and harmonic map equations, which are quasilinear, on Riemannian surfaces and Euclidean domains. In \cite{CJ2025}, the authors studied the Calderón problem for quasilinear conductivities on CTA manifolds.

In \cite{liimatainenLin2024}, it was shown that for an inverse source problem (where $F \neq 0$) for a semilinear equation, the gauge invariance of the problem can sometimes be broken. The breaking of gauge invariance in inverse source problems for reaction–diffusion equations and nonlinear wave equations was further investigated in \cite{KLL24,QXYZ2025,LLPT25}. An inverse source problem for the fully nonlinear Monge–Ampère equation was studied in \cite{liimatainen2025inverse}, while an inverse problem for the prescribed mean curvature equation were addressed in \cite{LN26}.

 \subsection{Method of proof}
The proof relies on recovering a coupled system of equations for the unknowns from linearizations of the Dirichlet-to-Neumann (DN) map. The resulting system consists of three elliptic equations and equations whose coeffiients depend on solutions to the adjoint of the linearized equation, all coupled to one another. We now explain how this system is obtained and solved.

Let $(\sigma, F)$ and $(\tilde\sigma, \widetilde F)$ be coefficients for \eqref{eq:quasilin_equation} and let $u$ and $\tilde u$ denote the corresponding solutions, with $u_0$ and $\tilde u_0$ corresponding to a boundary value $f_0$.

First, we linearize \eqref{eq:quasilin_equation}. After expressing it as a magnetic Schr\"odinger equation, it becomes a Laplace equation with magnetic potential $A$ and zeroth-order term $Q$. Using the knowledge of the DN map of the linearization together with known results for the Schr\"odinger equation, we recover the coefficients of the linearized equation up to a gauge function $\tilde{\varphi}$. More precisely, we obtain
\[
A - \widetilde{A} = \nabla \tilde{\varphi} \quad \text{and} \quad Q = \widetilde Q,
\]
where
\[
A = \frac{i}{2}\frac{\nabla(\sigma+qu_0)+q\nabla u_0}{\sigma+qu_0},
\]
and
\[
Q = \frac{\Delta \sigma}{2(\sigma+qu_0)} + \text{lower-order terms in } \sigma \text{ and } u_0,
\]
and similarly for $\widetilde A$ and $\widetilde Q$.

Next, we recover the gauge function $\tilde{\varphi}$ by performing a second order linearization, which yields the integral identity
\[
0 = \int_\Omega V_0\big(\nabla \cdot (q V_1 \nabla V_2) + \nabla \cdot (q V_2 \nabla V_1)\big)
- \int_\Omega \widetilde{V}_0\big(\nabla \cdot (q \widetilde{V}_1 \nabla \widetilde{V}_2) + \nabla \cdot (q \widetilde{V}_2 \nabla \widetilde{V}_1)\big),
\]
where $V_1, V_2$ are solutions to the linearized equation corresponding to $(\sigma, F)$, $V_0$ is a solution to its adjoint equation, and $\widetilde{V}_1, \widetilde{V}_2, \widetilde{V}_0$ are the analogous quantities for $(\tilde\sigma, \widetilde F)$. We show that $V_0$ and $\widetilde{V}_0$ are related by the gauge transformation $\tilde{\varphi}$, namely
\[
\widetilde{V}_0 = \frac{\sigma + q u_0}{\tilde{\sigma} + q \tilde{u}_0} e^{-i\tilde{\varphi}} V_0.
\]
Choosing $V_1$ and $V_2$ to be complex geometric optics (CGO) solutions, we obtain the identity
\begin{equation*}
\nabla \cdot (q \nabla V_0) - A \, \nabla \cdot \big(q \nabla (B V_0)\big) = 0,
\end{equation*}
where
\[
A = e^{2i\tilde \varphi} \quad \text{and} \quad B = \frac{\sigma + qu_0}{\tilde\sigma + q\tilde u_0} e^{-i\tilde\varphi}.
\]
Here $V_0$ is any solution to the adjoint equation. 

By a specific choice of $V_0$, we first recover the elliptic PDE
\[
\nabla\cdot (q \nabla B) = 0
\]
for $B$. After solving it, we obtain the equation
\[
(1-A)\nabla\cdot (q \nabla V_0) = 0
\]
for $A$. To solve for $A$, we fix a point $x_0 \in \Omega$ and construct a CGO solution $V_0$ to the adjoint equation depending on $x_0$ such that $\nabla\cdot (q \nabla V_0)(x_0) \neq 0$. This construction relies on the structural condition $\nabla(\sigma/q) \neq 0, \quad \forall x \in \overline{\Omega}$. Varying $x_0$ over $\Omega$, we conclude $A \equiv 1$, and hence $\tilde\varphi \equiv 0$.

Let $\Theta = \sigma + q u_0$ and $\widetilde{\Theta} = \tilde\sigma + q \tilde u_0$. With $\tilde\varphi = 0$, taking the divergence of $A$ yields a quasilinear coupled elliptic system for $\hat\Theta := \Theta - \widetilde{\Theta}$ and $\hat u_0 := u_0 - \tilde u_0$:
\[
\begin{pmatrix}
\frac{1}{\Theta} & \frac{q}{\Theta} \\[4pt]
\frac{1}{2\Theta} & -\frac{q}{2\Theta}
\end{pmatrix}
\begin{pmatrix}
\Delta \hat{\Theta} \\[4pt]
\Delta \hat{u}_0
\end{pmatrix}
= \text{lower-order terms}.
\]
Applying unique continuation results for this elliptic system, we conclude
\[
\Theta = \widetilde{\Theta}, \qquad u_0 = \tilde u_0.
\]
These identities imply $\sigma = \tilde\sigma$ and $u_0 = \tilde u_0$. Substituting these relations into \eqref{eq:quasilin_equation} gives
\[
F = \widetilde{F},
\]
which completes the proof.

\subsection*{Acknowledgements}

T.L.\ and S.J. were partly supported by the Research Council of Finland (Centre of Excellence in Inverse Modelling and Imaging and FAME Flagship, grants 353091 and 359208). The authors
would like to thank Mikko Salo for helpful discussions.

\section{Preliminaries}

In this section, we establish local well-posedness for the Dirichlet problem \eqref{eq:quasilin_equation} in a neighborhood of a prescribed solution. Let $0 < \alpha < 1$ and $\delta > 0$, and define
\begin{equation}
\mathcal N_\delta := \{ f \in C^{2,\alpha}(\partial \Omega) : \|f\|_{C^{2,\alpha}(\partial \Omega)} \leq \delta \}.
\end{equation}
Note that if the source term $F$  in 
\[
\nabla \cdot ((\sigma + qu)\nabla u)=F
\]
 does not vanish identically, then zero is not a solution of \eqref{eq:quasilin_equation}. Consequently, the standard local theory developed in \cite{LLLS2019nonlinear,CFKKU2021calderon} cannot be applied directly. Instead, our argument follows the one  in \cite{liimatainenLin2024}.

\begin{theorem}\label{Well-posedness}
\textbf{(Well-posedness)}  
Let $\Omega \subset \mathbb{R}^n$ be a bounded domain with $C^\infty$ boundary and $n \ge 2$. Let $\alpha \in (0,1)$, $\sigma,q \in C^{1,\alpha}(\overline{\Omega})$, $F \in C^{\alpha}(\overline{\Omega})$, and $f_0 \in C^{2,\alpha}(\partial\Omega)$. 

Assume that there exists a solution $u_0 \in C^{2,\alpha}(\overline{\Omega})$ of
\begin{equation}\label{eq:non-linear_well_posedness}
\begin{cases}
\nabla \cdot ((\sigma + qu_0)\nabla u_0)=F & \text{in }\Omega,\\
u_0=f_0 & \text{on }\partial\Omega .
\end{cases}
\end{equation}
Suppose further that
\begin{equation}\label{eq:well-posedness_conditions}
\sigma + qu_0 \ge c>0 \quad \text{in } \overline{\Omega},
\qquad
\nabla\cdot(q\nabla u_0)\le 0 \quad \text{in } \overline{\Omega}.
\end{equation}

Then there exist constants $\delta>0$ and $C>0$ such that for every $f \in \mathcal N_\delta$ the problem
\begin{equation}
\begin{cases}
\nabla \cdot ((\sigma + qu)\nabla u)=F & \text{in }\Omega,\\
u=f_0+f & \text{on }\partial\Omega
\end{cases}
\end{equation}
admits a unique solution $u \in C^{2,\alpha}(\overline{\Omega})$ in the class
\[
\{ w \in C^{2,\alpha}(\overline{\Omega}) : \|w-u_0\|_{C^{2,\alpha}(\overline{\Omega})}\le C\}.
\]

Moreover, the solution operator and the Dirichlet-to-Neumann map
\begin{equation}\label{eq:Frechet_differentiability}
S : \mathcal N_\delta \to C^{2,\alpha}(\overline{\Omega}), 
\quad f \mapsto u,
\qquad
\Lambda : \mathcal N_\delta \to C^{1,\alpha}(\partial\Omega),
\quad f \mapsto \partial_\nu u|_{\partial\Omega},
\end{equation}
are $C^\infty$ Fréchet differentiable.
\end{theorem}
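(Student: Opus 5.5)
We will apply the implicit function theorem in Banach spaces. Write $u = u_0 + w$, where $w$ is the new unknown, and define
\[
\mathcal F : C^{2,\alpha}(\partial\Omega)\times C^{2,\alpha}(\overline\Omega) \to C^{\alpha}(\overline\Omega)\times C^{2,\alpha}(\partial\Omega),
\]
\[
\mathcal F(f,w) := \Big( \nabla\cdot\big((\sigma+q(u_0+w))\nabla(u_0+w)\big) - F,\; w|_{\partial\Omega} - f\Big).
\]
Since $\sigma,q\in C^{1,\alpha}$, the first component maps $C^{2,\alpha}$ into $C^{\alpha}$. Moreover, $\mathcal F$ is a polynomial of degree two in $w$ with bounded multilinear coefficients, so it is $C^\infty$ (indeed real-analytic) in the Fréchet sense. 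Because $u_0$ solves \eqref{eq:non-linear_well_posedness}, we have $\mathcal F(0,0)=0$.

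The Fréchet derivative in $w$ at $(0,0)$ is
\[
\partial_w\mathcal F(0,0)v = \Big(\nabla\cdot\big((\sigma+qu_0)\nabla v\big) + \nabla\cdot(q v\nabla u_0),\; v|_{\partial\Omega}\Big).
\]
Expanding, the interior operator is
\[
Lv = \nabla\cdot\big((\sigma+qu_0)\nabla v\big) + q\nabla u_0\cdot\nabla v + \nabla\cdot(q\nabla u_0)\, v .
\]
This is a second order operator in divergence form with $C^{1,\alpha}$ principal coefficient $\sigma+qu_0\ge c>0$, $C^{\alpha}$ first-order coefficient, and zeroth order coefficient $c_0:=\nabla\cdot(q\nabla u_0)$. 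The latter lies in $C^\alpha$ because $q\in C^{1,\alpha}$ and $u_0\in C^{2,\alpha}$. Expanding the divergence puts $L$ in nondivergence form with $C^{\alpha}$ coefficients, so Schauder theory applies. By the hypothesis \eqref{eq:well-posedness_conditions}, $c_0\le 0$.

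The main step is to show that $\partial_w\mathcal F(0,0)$ is a linear isomorphism onto $C^\alpha(\overline\Omega)\times C^{2,\alpha}(\partial\Omega)$.
\begin{itemize}
\item \emph{Injectivity.} Since $L$ is uniformly elliptic with $c_0\le 0$, the weak maximum principle (Gilbarg--Trudinger, Thm.~3.1/Cor.~3.2) shows that $Lv=0$ with $v|_{\partial\Omega}=0$ forces $v=0$.
\item \emph{Surjectivity.} The Schauder existence theorem (Gilbarg--Trudinger, Thm.~6.14) then gives, for each $(g,h)\in C^{\alpha}(\overline\Omega)\times C^{2,\alpha}(\partial\Omega)$, a unique $v\in C^{2,\alpha}(\overline\Omega)$ with $Lv=g$ and $v|_{\partial\Omega}=h$. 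The boundary is $C^\infty$, which is more than enough.
\item \emph{Boundedness of the inverse.} This follows from the Schauder estimate $\|v\|_{C^{2,\alpha}}\le C(\|g\|_{C^\alpha}+\|h\|_{C^{2,\alpha}})$, or alternatively from the open mapping theorem.
\end{itemize}
This is the only place where the sign condition enters, and it is the step needing care: one must check the regularity of the coefficients so that Schauder theory genuinely applies.

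The implicit function theorem now yields $\delta>0$, a neighborhood $\{ \|w\|_{C^{2,\alpha}}\le C\}$, and a $C^\infty$ map $f\mapsto w(f)$ on $\mathcal N_\delta$ such that, inside that neighborhood, $\mathcal F(f,w)=0$ holds exactly when $w=w(f)$. This gives existence and local uniqueness of $u=u_0+w(f)$ in the stated class, after shrinking $\delta$ and $C$ if necessary. The solution operator $S(f)=u_0+w(f)$ is then $C^\infty$. Finally, $\Lambda(f)=\partial_\nu S(f)|_{\partial\Omega}$ is the composition of $S$ with the bounded linear map $C^{2,\alpha}(\overline\Omega)\to C^{1,\alpha}(\partial\Omega)$, $u\mapsto\partial_\nu u|_{\partial\Omega}$, and is therefore also $C^\infty$ Fréchet differentiable.
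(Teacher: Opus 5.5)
Your proposal is correct and follows the paper's proof. Both apply the implicit function theorem to $(f,u)\mapsto(\nabla\cdot((\sigma+qu)\nabla u)-F,\,u|_{\partial\Omega}-(f_0+f))$; you only shift to $u=u_0+w$. Both show that the linearization $L$, together with the boundary trace, is an isomorphism $C^{2,\alpha}(\overline\Omega)\to C^{\alpha}(\overline\Omega)\times C^{2,\alpha}(\partial\Omega)$ via the maximum principle (using $\nabla\cdot(q\nabla u_0)\le 0$) and Schauder theory, and both obtain $\Lambda$ by composing $S$ with the normal derivative. Your explicit checks on the H\"older regularity of the coefficients and on the smoothness of the quadratic map fill in details the paper leaves implicit.
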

\begin{remark}
    The condition
\[
\sigma + q u_0 \geq c > 0
\]
is needed to ensure that the differential operator in \eqref{eq:quasilin_equation} is \emph{uniformly elliptic}, which allows us to apply the theory of elliptic equations to guaranty well-posedness. 

The second assumption,
\[
\nabla \cdot (q \nabla u_0) \leq 0,
\]
ensures the applicability of the \emph{maximum principle}, which is needed for the uniqueness of the solution.
\end{remark}

\begin{proof}
The proof relies on the implicit function theorem in Banach spaces.

Let 
\[
\mathcal{B}_1=C^{2,\alpha}(\partial\Omega),\qquad
\mathcal{B}_2=C^{2,\alpha}(\overline{\Omega}),\qquad
\mathcal{B}_3=C^{0,\alpha}(\overline{\Omega})\times C^{2,\alpha}(\partial\Omega).
\]

Define the map
\[
\Psi:\mathcal{B}_1\times\mathcal{B}_2\to\mathcal{B}_3,
\]
\[
\Psi(f,u)=\big(\nabla\cdot((\sigma+qu)\nabla u)-F,\; u|_{\partial\Omega}-(f_0+f)\big).
\]

Since $u_0$ solves \eqref{eq:non-linear_well_posedness}, we have
\[
\Psi(0,u_0)=(0,0).
\]

 Next, we compute the linearization with respect to $u$ at $(0,u_0)$. For $v\in\mathcal{B}_2$,
\[
D_u\Psi|_{(0,u_0)}(v)
=
\left(
\nabla\cdot((\sigma+qu_0)\nabla v)
+q\nabla u_0\cdot\nabla v
+\nabla\cdot(q\nabla u_0)v,
\;
v|_{\partial\Omega}
\right).
\]

The differential operator appearing in the first component is a uniformly elliptic second-order operator due to the positivity condition $\sigma+qu_0\ge c>0$. Together with assumption $\nabla\cdot(q\nabla u_0)\le0$, the maximum principle applies (see \cite[Corollary 3.2]{gilbarg2015elliptic}). 

Standard Schauder theory for linear elliptic boundary value problems (see \cite[Theorems 6.6 and 6.8]{gilbarg2015elliptic}) implies that for any
\[
(h_1,h_2)\in C^{0,\alpha}(\overline{\Omega})\times C^{2,\alpha}(\partial\Omega)
\]
there exists a unique solution $v\in C^{2,\alpha}(\overline{\Omega})$ to
\[
\begin{cases}
\nabla\cdot((\sigma+qu_0)\nabla v)
+q\nabla u_0\cdot\nabla v
+\nabla\cdot(q\nabla u_0)v
= h_1 & \text{in }\Omega,\\
v=h_2 & \text{on }\partial\Omega.
\end{cases}
\]

Moreover, the Schauder estimate yields
\[
\|v\|_{C^{2,\alpha}(\overline{\Omega})}
\le C\big(
\|h_1\|_{C^{0,\alpha}(\overline{\Omega})}
+\|h_2\|_{C^{2,\alpha}(\partial\Omega)}
\big),
\]
which shows that $D_u\Psi|_{(0,u_0)}:\mathcal{B}_2\to\mathcal{B}_3$ is a bounded bijection with bounded inverse.

Thus $D_u\Psi|_{(0,u_0)}$ is an isomorphism between the Banach spaces $\mathcal{B}_2$ and $\mathcal{B}_3$. Applying the implicit function theorem in Banach spaces (see \cite[Theorem 10.6, Remark 10.5]{RR06}), there exist $\delta>0$ and a $C^\infty$ mapping
\[
S:\mathcal N_\delta\to C^{2,\alpha}(\overline{\Omega})
\]
such that
\[
\Psi(f,S(f))=(0,0)
\]
for all $f\in \mathcal N_\delta$.

Consequently, $u=S(f)$ is the unique solution of the boundary value problem in a neighborhood of $u_0$. The estimate
\[
\|u-u_0\|_{C^{2,\alpha}(\overline{\Omega})}\le C\|f\|_{C^{2,\alpha}(\partial\Omega)}
\]
follows from the implicit function theorem.

Finally, since the solution operator $S$ is $C^\infty$ in the Fréchet sense and the normal derivative defines a continuous linear mapping
\[
C^{2,\alpha}(\overline{\Omega})\to C^{1,\alpha}(\partial\Omega),
\]
the Dirichlet-to-Neumann map
\[
\Lambda(f)=\partial_\nu S(f)|_{\partial\Omega}
\]
is also $C^\infty$.
\end{proof}
In the next remark we show that the class of coefficients satisfying the assumption on the existence of a solution $u_0\in C^{2,\alpha}(\overline\Omega)$ satisfying \eqref{eq:well-posedness_conditions} contains for example all non-positive sources $F$ at least when $\sigma=q=1$.

\begin{remark}
Let $\sigma=1$, $q=1$, $F \leq 0$ and consider
\begin{equation}\label{special_case}
\begin{cases}
\nabla \cdot ((1 + u_0) \nabla u_0)=F & \text{in } \Omega \\
u=f_0 & \text{on } \partial \Omega.
\end{cases}
\end{equation}
We show that this equation admits a nonnegative solution $u$ for any boundary data $f_0\geq 0$ that satisfies \eqref{eq:well-posedness_conditions} In particular, we can take $u_0$ in Theorem \ref{Well-posedness} to be the solution corresponding to any $f_0\geq 0$.

To see this, let $w$ solve the linear Dirichlet problem
\begin{equation}\label{intermediate_dirichlet}
\begin{cases}
\Delta w = F & \text{in } \Omega \\
w = f + \frac{1}{2}f^2 & \text{on } \partial \Omega.
\end{cases}
\end{equation}
We first show that $w \geq 0$ in $\Omega$. By the maximum principle, since $F \leq 0$, the function $w$ attains its minimum on the boundary. Hence
\[
\min_{\overline{\Omega}} w = \min_{\partial\Omega} w = \min_{\partial\Omega} \left(f + \frac{1}{2}f^2\right) \geq 0,
\]
where the last inequality follows from $f \geq 0$. Thus $w \geq 0$ in $\overline{\Omega}$.

Next we define $u_0:=-1+\sqrt{1+2w}$, then $w=u_0 + \frac{1}{2}u_0^2$ and we immediately see that $u_0$ solves \eqref{special_case}.
We also see  that the conditions in \eqref{eq:well-posedness_conditions} hold:
     \[\sigma + q u_0= 1+u_0=\sqrt{1+2w}\geq 1>0\]
     and
\[\nabla\cdot(q\nabla u_0)=\Delta u_0= \frac{F-\abs{\nabla u_0}^2}{1+u_0} \leq 0\] as $1+u_0 >0$.

Thus, by choosing any nonnegative boundary data $f_0$, we obtain a solution $u_0$ satisfying the conditions of Theorem \ref{Well-posedness}. 
\end{remark}

\section{Higher order linearization}
Let us consider the nonlinear equation 
\begin{equation}\label{eq:non-linear_well_posedness_eps_dependent} 
\begin{cases}
\begin{array}{ll}
     \nabla \cdot ((\sigma + qu_0) \nabla u_0)=F & in\ \Omega \\
       u=f_0+\eps_1f_1+\eps_2f_2 & on\ \partial \Omega 
\end{array} 
\end{cases}
\end{equation}
where $\eps_1,\eps_2$ are small and $f_1,f_2\in C^{2,\alpha}(\p\Omega)$.
We compute the linearizations of this up to second order in the parameters $\eps_1$ and $\eps_2$. We denote $\eps=(\eps_1,\eps_2)$. We remark that differentiating the equation with respect to parameters is justified by \eqref{eq:Frechet_differentiability}.

The first linearized equation with respect to $\eps_j$, $j=1,2$, is 
\begin{equation}\label{eq:first_linearized_equation}
\nabla\cdot (\sigma+qu_0)\nabla V_j+q\nabla u_0\cdot \nabla V_j+ \nabla\cdot (q\nabla u_0)V_j=0.
\end{equation}
Here
\[
V_j=\p_{\eps_j}|_{\eps=0} u_\eps,
\]
where $u_\eps$ solves \eqref{eq:non-linear_well_posedness_eps_dependent}. We also write the linearized equation as
\[
LV=0,
\]
where
$L$ is the linear operator 
\begin{equation}\label{eq:linearized_op}
L=\nabla\cdot (\sigma+qu_0)\nabla +q\nabla u_0\cdot \nabla + \nabla\cdot (q\nabla u_0).
\end{equation}


Taking the mixed derivative $\p^2_{\eps_1\eps_2}$ of \eqref{eq:non-linear_well_posedness_eps_dependent}, we obtain the equation
\begin{multline}\label{eq:second_linearized_equation}
    0=\frac{\p^2}{\p\eps_1\p\eps_2}\Big|_{\eps=0}\left(\nabla \cdot ((\sigma + qu_0) \nabla u_0)\right) \\
    =\frac{\p}{\p\eps_1}\Big|_{\eps=0}\left(\nabla\cdot(\sigma+qu_\eps)\nabla\p_{\eps_2}u_\eps+\nabla\cdot(q\p_{\eps_2}u_\eps\nabla u_\eps)  \right) \\
    =\nabla\cdot(\sigma+qu_0)\nabla w+\nabla \cdot(qV_1\nabla V_2)+\nabla \cdot(qV_2\nabla V_1)+ \nabla \cdot (qw)\nabla u_0.
\end{multline}
Here 
\[
w=\p^2_{\eps_1\eps_2}|_{\eps=0} u_\eps.
\]
The above is the same as 
\begin{equation}\label{eq:2nd_lin_eq}
Lw=-\nabla \cdot(qV_1\nabla V_2)-\nabla \cdot(qV_2\nabla V_1).
\end{equation}

By using \eqref{eq:linearized_op}, we have for $f$ and $g$ $\in C^{2,\alpha}(\Omega)$ complex valued that 
\begin{align*}
    \int_{\Omega} \bar f\s L(g)
    &=\int_{\Omega} 
    \bar{f} \s \left(\nabla\cdot (\sigma+qu_0)\nabla +q\nabla u_0\cdot \nabla + \nabla\cdot (q\nabla u_0)\right)g \\
    &= \int_{\Omega} \bar{f}\s (\nabla\cdot (\sigma+qu_0)\nabla g) + \int_{\Omega}\bar{f}\s \nabla\cdot(q g \nabla u_0)\\
    &= - \int_{\Omega} \nabla \bar{f} \ccdot (\sigma+qu_0)\nabla g) - \int_{\Omega}\nabla \bar{f}\ \cdot (qg\nabla u_0) \\
    &\qquad + \int_{\partial{\Omega}} \bar{f}\ (\sigma+qu_0) \nu \cdot \nabla g + \int_{\partial{\Omega}} (\bar{f} q g)\s \nu \ccdot \nabla u_0 \\
    &= \int_\Omega  (\nabla\cdot (\sigma+qu_0)\nabla \bar{f})g - \int_{\Omega}(\nabla \bar{f}\ \cdot (q\nabla u_0)) g \\
&\qquad+\int_{\partial{\Omega}} \bar{f}\ (\sigma+qu_0) \nu \cdot \nabla g 
    + \int_{\partial{\Omega}} \bar{f} q g\s \nu \ccdot \nabla u_0
    -\int_{\partial{\Omega}} g (\sigma+qu_0) \nu \cdot \nabla \bar{f}.
    \end{align*}
    Since $u_0$, $q$ and $\sigma$ are real, the above is equal to
    \begin{multline*}
         \int_\Omega  \overline{(\nabla\cdot (\sigma+qu_0)\nabla f)}g - \int_{\Omega}\overline{(\nabla f\ \cdot (q\nabla u_0))} g \\
         +\int_{\partial{\Omega}} \bar{f}\ (\sigma+qu_0) \nu \cdot \nabla g 
    + \int_{\partial{\Omega}} \bar{f} q g\s \nu \ccdot \nabla u_0
    -\int_{\partial{\Omega}} g (\sigma+qu_0) \nu \cdot \nabla \bar{f}.
         \end{multline*}
In the case the boundary values of $f$ and $g$ were zero, then the above computation shows that 
    \[
    \langle f,L g\rangle=\langle(\nabla\cdot (\sigma+qu_0)\nabla)f- (q\nabla u_0) \cdot \nabla f,g \rangle.
    \]
    This means that the Hermitian adjoint of $L$ is 
\begin{equation}\label{eq:Lstar}
    L^{*}:=(\nabla\cdot (\sigma+qu_0)\nabla )- (q\nabla u_0) \cdot \nabla .
    \end{equation}
So in general, we may write
         \begin{multline*}
        \int_{\Omega} \bar{f}\s L g= \int_\Omega (L^{*}\overline f) g  \\+\int_{\partial{\Omega}} \bar{f}\ (\sigma+qu_0) \nu \cdot \nabla g 
    + \int_{\partial{\Omega}} \bar{f} q g\s \nu \ccdot \nabla u_0
    -\int_{\partial{\Omega}} g (\sigma+qu_0) \nu \cdot \nabla \bar{f}. 
    \end{multline*}
    
Let us then derive an integral identity for the second linearization \eqref{eq:2nd_lin_eq}. For this, let $v_0$ be a solution to $L^*V_0=0$ and $V_1$ and $V_2$ solutions to $Lv=0$ in $\Omega$. Then the above with $f=\overline V_0$ and $g=w$ and \eqref{eq:2nd_lin_eq} yields that
\begin{multline}\label{eq:2nd_lin_integ_id}
    -\int_\Omega V_0(\nabla \cdot(qV_1\nabla V_2)+\nabla \cdot(qV_2\nabla V_1)) \\=\int_{\partial{\Omega}} V_0\ (\sigma+qu_0) \nu \cdot \nabla w \ + \int_{\partial{\Omega}} V_0\ q \nabla u_0 \nu \cdot \nabla w   -\int_{\partial{\Omega}} w (\sigma+qu_0) \nu \cdot \nabla V_0.
\end{multline}

If $\sigma=\tilde \sigma$ on the boundary, then the assumption $\Lambda=\widetilde \Lambda$
yields the integral identity of the second linearization  
\begin{equation}\label{eq:2nd_lin_integral_id_substracted}
0=\int_\Omega V_0(\nabla \cdot(qV_1\nabla V_2)+\nabla \cdot(qV_2\nabla V_1))-\int_\Omega \widetilde V_0(\nabla \cdot(q \tilde V_1\nabla \tilde V_2)+\nabla \cdot(q\tilde V_2\nabla \tilde V_1)),
\end{equation}
where quantities with tildes correspond to the coefficients $(\tilde \sigma, \tilde F)$ of \eqref{eq:quasilin_equation}.

\section{Solution to the first linearized problem}\label{sec:sol_to_first_lin}
We know the DN map of the first linearized equation \eqref{eq:first_linearized_equation} by differentiating the DN map of the nonlinear equation. 
That is, we know that the DN maps of the problems 
\begin{equation}\label{eq:non-linear_well_posedness_eps_dependent1} 
\begin{cases}
\begin{array}{ll}
     LV=0 & in\ \Omega \\
       V=f & on\ \partial \Omega 
\end{array} 
\end{cases}
\end{equation}
and 
\begin{equation}\label{eq:non-linear_well_posedness_eps_dependent2} 
\begin{cases}
\begin{array}{ll}
     \widetilde L\widetilde V=0 & in\ \Omega \\
       \widetilde V=f & on\ \partial \Omega 
\end{array} 
\end{cases}
\end{equation}
are the same.

By expanding the linearized equation \eqref{eq:first_linearized_equation} and diving by $-(\sigma+qu_0)$, we obtain the equation
\begin{equation}\label{eq:first_linearized_equation-2}0=-\Delta v - \left[\frac{\nabla(\sigma+qu_0) +q \nabla u_0}{\sigma+qu_0}\right ] \cdot \nabla v - \left[\frac{\nabla q \cdot \nabla u_0 +q \Delta u_0}{\sigma+qu_0}\right ] v 
\end{equation}
for the linearized solutions, which we now denoted by $v$. We use the sign convention $\Delta=\nabla\cdot \nabla$ for the Laplacian. This is an equation of the form
\begin{equation}\label{eq:gen_2nd_ord}
-\Delta v+X\cdot \nabla v+Rv=0,
\end{equation}
where
\[
X=-\frac{\nabla(\sigma+qu_0) +q \nabla u_0}{\sigma+qu_0} \  \text{ and } \ R=-\frac{\nabla q \cdot \nabla u_0 +q \Delta u_0}{\sigma+qu_0}.
\]
By the the substitutions $A=\frac {i X}{2}$ and $Q=\frac 14 \abs{X}^2-\frac 12 \nabla \cdot X +R$, where $\abs{X}^2=X\cdot X$, this becomes the magnetic Shr\"odinger equation 
\begin{equation}\label{eq:maganetic_schrodinger}
L_{A,Q}v:= -\sum_{j=1}^{n} (\partial_{x_j}+ i A_j)^2 v +Q
v=0,
\end{equation}
where 
\begin{align}\label{A-equation}
A&=-\frac i2\frac{\nabla(\sigma+ qu_0) +q \nabla u_0}{\sigma+qu_0}, 
\\
\label{Q-equation}
Q&=\frac{\Delta(\sigma+qu_0)-q\Delta u_0}{2(\sigma + qu_0)} -\frac{\nabla q \cdot \nabla u_0}{2(\sigma+qu_0)} + \frac{q^2 \abs{\nabla u_0}^2 -\abs{\nabla (\sigma+qu_0)}^2 }{4(\sigma+qu_0)^2}.
\end{align}

Thus
\[
L_{A,Q} = -\frac{1}{\sigma+qu_0} L.
\]
We will denote by $v$ solutions to $L_{A,Q}v=0$ and by $V$ solutions to $LV=0$, although these solution sets coincide. The reason for considering the operator $L_{A,Q}$ alongside $L$ is that we wish to use the convenient transformation properties of $L_{A,Q}$, the existing construction of complex geometric optics (CGO) solutions for $L_{A,Q}v=0$ and uniqueness results for the corresponding inverse problem.

Since
\[
L = -(\sigma + q u_0) L_{A,Q},
\]
its Hermitian adjoint is given by
\[
L^* = -L_{A,Q}^* \, \overline{(\sigma + q u_0)}
    = -L_{A,Q}^*(\sigma + q u_0),
\]
where we have used that $\sigma + q u_0$ is real-valued. Here $L_{A,Q}^*$ denotes the Hermitian adjoint of $L_{A,Q}$. We will denote by $v_0$ solutions to $L_{A,Q}^*v_0=0$ and by $V_0$ solutions to $L^*V_0=0$. These solution sets do not coincide, but are related by $v_0 = (\sigma+qu_0)V_0$. We have analogous formulas for the operators corresponding to the coefficients $(\tilde{\sigma}, \tilde{u}_0)$. 

Since we know that the DN maps of the linearized equations \eqref{eq:non-linear_well_posedness_eps_dependent1} and \eqref{eq:non-linear_well_posedness_eps_dependent2} agree, we have that the DN maps for the magnetic Shr\"odinger with corresponding coefficients $(A,Q)$ and $(\wt A, \wt Q)$ agree. 
It then follows from \cite[Theorem C]{nakamura1995global}  that 
\begin{equation}\label{eq:Q_recovery}
dA=d\wt{A} \text{ and } Q=\wt Q,
\end{equation}
where $d$ is the exterior derivative. Since we assume that $\Omega$ is simply connected, we there is a purely complex $\tilde \varphi\in C^\infty(\Omega)$ such that
\begin{equation}\label{eq:A_tildeA}
A-\wt A=\nabla \tilde \varphi.
\end{equation}
By \eqref{eq:A_on_bndr}, $\tilde{\varphi}$ is constant on $\partial \Omega$. By subtracting this constant, we may normalize $\tilde{\varphi}$ so that
\[
\tilde{\varphi} = 0 \quad \text{on } \partial \Omega.
\]

The gauge freedom $\nabla \tilde\varphi$ introduces a new unknown to our problem, which we will need to recover. This will be done in Section \ref{sec:recovering_phi}.

\section{Relation of linearized solution for different coefficients}
Next we derive a relation between $v$ and $\tilde v$ where $v$ solves \eqref{eq:non-linear_well_posedness_eps_dependent1} and $\tilde v$ solves \eqref{eq:non-linear_well_posedness_eps_dependent2}. It is well known that the solutions with the corresponding coefficients $(A,Q)$ and $(A +\nabla \tilde \varphi, Q)$ satisfy
\begin{equation}\label{eq:gauge_trans}
L_{A+\nabla \tilde \varphi,Q}(e^{-i\tilde \varphi}v)=e^{-i\tilde \varphi}L_{A,Q}v
\end{equation}
for any $\tilde \varphi$ smooth enough. 
For completeness we provide the details of this fact:

\begin{align*}
&L_{A+\nabla\tilde \varphi,Q}(e^{-i\tilde \varphi}v)
= - \sum_{j=1}^{n} 
   \big(\partial_{x_j}+ iA_j + i\partial_{x_j}\tilde\varphi\big)^2(e^{-i\tilde\varphi}v)
   + Q e^{-i\tilde\varphi}v \\
&= - \sum_{j=1}^{n} 
   \big(\partial_{x_j}+ iA_j + i\partial_{x_j}\tilde \varphi\big)
   \big[\,e^{-i\tilde \varphi}(\partial_{x_j}v + iA_jv)\big]
   + Q e^{-i\tilde \varphi}v \\
&= - \sum_{j=1}^{n}
   \Big\{
     \partial_{x_j}\big(e^{-i\tilde \varphi}(\partial_{x_j}v + iA_jv)\big)
     + (iA_j+i\partial_{x_j}\tilde \varphi)
       e^{-i\tilde \varphi}(\partial_{x_j}v + iA_jv)
   \Big\}
   + Q e^{-i\tilde \varphi}v \\
&= - \sum_{j=1}^{n}
   \Big\{
     -i(\partial_{x_j}\tilde \varphi)e^{-i\tilde \varphi}(\partial_{x_j}v + iA_jv)
     + e^{-i\tilde \varphi}\partial_{x_j}(\partial_{x_j}v + iA_jv)
     + (iA_j+i\partial_{x_j}\tilde\varphi)
       e^{-i\tilde \varphi}\\
       &\qquad (\partial_{x_j}v + iA_jv)
   \Big\}
   + Q e^{-i\tilde \varphi}v \\
&= - e^{-i\tilde \varphi}
   \sum_{j=1}^{n}\big(\partial_{x_j}+ iA_j\big)^2 v
   + Q e^{-i\tilde \varphi}v \\
&= e^{-i\tilde \varphi}L_{A,Q}v.
\end{align*}
This is equivalent to 
\[
L_{\widetilde A,\widetilde Q}e^{i\tilde\varphi}v=e^{i\tilde\varphi}L_{A,Q}v.
\]

By Section \ref{boundary_determination}, we have that 
\[
A=\wt A \text{ on } \p \Omega.
\]
Thus $\tilde \varphi$  is constant on the boundary and we can redefine it to be $0$ on $\p\Omega$ while still having \eqref{eq:A_tildeA}. 

Note next that if $v$ solves $L_{A,Q}v=0$ with $v=f$ on $\p \Omega$, then 
\begin{equation}
L_{\wt A, \wt Q}e^{i\tilde \varphi }v=    L_{A-\nabla \tilde \varphi, Q}e^{i\tilde\varphi }v=e^{i\tilde\varphi}L_{A,Q}v=0.
\end{equation}
Now, let $\tilde v$ solve
\[
L_{\wt A, \wt Q}\tilde v=0
\]
with $\tilde v=f$ on $\p\Omega$. Since $\tilde \varphi=0$ on $\p \Omega$, we have that $\tilde v$ and $e^{i\tilde \varphi}v$ satisfy the same equation $L_{\wt A, \wt Q}u=0$. Moreover, by the second condition in \eqref{eq:assumption_for_coefficients}, we have that $L_{\wt A, \wt Q}u=0$ satisfy the maximum principle and thus its solutions to Dirichlet problem are unique. It follows that
\begin{equation}\label{eq:relation_for_LAQ_solutions}
\tilde v=e^{i\tilde \varphi}v \text{ in } \Omega.
\end{equation}

Next we relate the solutions of the adjoint equation with coefficients $(A,Q)$ and $(\widetilde A,\widetilde Q)$. We first note that the formal Hermitian adjoint of $L_{A,Q}$ is
\begin{equation}\label{eq:adjoint_operator}
L^*_{A,Q}=L_{\overline A, \overline Q}.
\end{equation}
Now,  if $v_0$ solves $L^*_{A,Q}v_0=0$ with $v_0=f$ on $\p \Omega$, then
\begin{align*}
L^*_{\widetilde A,\widetilde Q}\,e^{i\overline{\tilde\varphi}}v_0
&= L_{\overline{\widetilde A},\overline{\widetilde Q}}\,e^{i\overline{\tilde
\varphi}}v_0
 = L_{\overline{A-\nabla\widetilde\varphi},\overline Q}\,e^{i\overline{
 \tilde\varphi}}v_0= L_{\overline{A}-\nabla\overline{\tilde\varphi}, \overline Q}\,e^{i\overline{\tilde
 \varphi}}v_0.
\end{align*}
We use the identity \eqref{eq:gauge_trans} with $-\overline{\tilde\varphi}$, $\overline A$, $\overline Q$ in place of $\tilde\varphi$,  $A$ and $Q$ respectively. With this, the above equals
\[
L_{\overline{A}-\nabla\overline{\tilde\varphi}, \overline Q}\,e^{i\overline{\tilde
 \varphi}}v_0=e^{i\overline {\tilde\varphi}}L_{\overline{A}, \overline Q}v_0=e^{i\overline{\tilde \varphi}}L^*_{A,Q}v_0=0.
\]


Now, $\tilde v_0$ also solves
\[
L^*_{\wt A, \wt Q}\tilde v_0=0
\]
with $\tilde v_0=f$ on $\p\Omega$. Since $\tilde\varphi=0$ on $\p \Omega$, we have that $\tilde v_0$ and $e^{i\overline{\tilde\varphi}}v$ satisfy the same equation $L^*_{\wt A, \wt Q}u=0$. Moreover, by the second condition in \eqref{eq:assumption_for_coefficients},
we have that $L^*_{\wt A, \wt Q}u=0$ satisfy the maximum principle and thus its solutions to Dirichlet problem are unique. It follows that\begin{equation}\label{eq: relation for solutions of L*_{A,Q}}
  \tilde v_0=e^{i\overline{\tilde\varphi}}v_0 \text{ in } \Omega  
\end{equation}
as well. 
Thus, by \eqref{eq:relation_for_LAQ_solutions}, we have that 
\[
\widetilde L\widetilde V=0 \ \text{ if and only if } \ \widetilde V=e^{i\tilde\varphi}V, \text{ where } LV=0.
\]
Moreover
\[
\widetilde L^*\widetilde V_0=0 \ \text{ if and only if } \ \widetilde V_0=\frac{\sigma+q u_0}{\tilde \sigma+q\tilde u_0}e^{i\overline{\tilde\varphi}}V_0, \text{ where } L^*V_0=0.
\]
We also note that since $\tilde\varphi$ is purely imaginary, we have 
\[
i\overline{\tilde\varphi}=-i\tilde \varphi.
\]
Therefore \eqref{eq: relation for solutions of L*_{A,Q}} becomes 
\begin{equation}\label{relation for v_0 for L*}
    \tilde v_0=e^{-i{\tilde\varphi}}v_0 \text{ in } \Omega
\end{equation}
We collect the above computations in the following.
\begin{lemma}
Let $L$ and $\widetilde L$ be the linearized operators in \ref{eq:linearized_op} corresponding to coefficients $(\sigma,u_0)$ and $(\tilde \sigma,\tilde u_0)$ respectively. Then  $\tilde V$ solves 
\begin{equation}\label{eq:non-linear_well_posedness_eps_dependent3} 
\begin{cases}
\begin{array}{ll}
     \widetilde L \widetilde V=0 & in\ \Omega \\
       \widetilde V=f & on\ \partial \Omega 
\end{array} 
\end{cases}
\end{equation}
if and only if 
\[
\widetilde V=e^{i\tilde\varphi} V
\]
where V solves
\begin{equation}\label{eq:non-linear_well_posedness_eps_dependent4} 
\begin{cases}
\begin{array}{ll}
     L V=0 & in\ \Omega \\
       V=f & on\ \partial \Omega. 
\end{array} 
\end{cases}
\end{equation}

Also, $\widetilde V_0$ solves 
\begin{equation}\label{eq:non-linear_well_posedness_eps_dependent5} 
\begin{cases}
\begin{array}{ll}
     \widetilde L^* \widetilde V_0=0 & in\ \Omega \\
       \widetilde V_0=f & on\ \partial \Omega 
\end{array} 
\end{cases}
\end{equation}
if and only if 
\[
\widetilde V_0=\frac{\sigma+q u_0}{\tilde \sigma+q\tilde u_0}e^{-i\tilde\varphi}V_0
\]
where $V_0$ solves
\begin{equation}\label{eq:non-linear_well_posedness_eps_dependent6} 
\begin{cases}
\begin{array}{ll}
     L^* V_0=0 & in\ \Omega \\
       V_0=f & on\ \partial \Omega.
\end{array} 
\end{cases}
\end{equation}
\end{lemma}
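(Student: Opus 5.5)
The plan is to reduce both statements to the gauge relations for the magnetic Schr\"odinger operators established just above, namely \eqref{eq:relation_for_LAQ_solutions} and \eqref{relation for v_0 for L*}. We then translate back to the operators $L$, $\widetilde L$ and their adjoints using
\[
L=-(\sigma+qu_0)L_{A,Q}, \qquad L^*=-L_{A,Q}^*(\sigma+qu_0),
\]
together with the analogous identities for $\widetilde L$, $\widetilde L^*$.

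\emph{First statement.} Since $\sigma+qu_0\ge c>0$, the equations $LV=0$ and $L_{A,Q}V=0$ have the same solutions, and likewise for $\widetilde L$ and $L_{\wt A,\wt Q}$. Let $V$ solve $LV=0$ with $V=f$ on $\p\Omega$. By the gauge identity \eqref{eq:gauge_trans} and $\wt A=A-\nabla\tilde\varphi$, we get $L_{\wt A,\wt Q}(e^{i\tilde\varphi}V)=e^{i\tilde\varphi}L_{A,Q}V=0$, and $e^{i\tilde\varphi}V=f$ on $\p\Omega$ because $\tilde\varphi|_{\p\Omega}=0$. Uniqueness for the Dirichlet problem of $\widetilde L$, which is the invertibility used in Theorem \ref{Well-posedness} and rests on $\nabla\cdot(q\nabla\tilde u_0)\le 0$ and the maximum principle, then gives $\widetilde V=e^{i\tilde\varphi}V$. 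For the converse, given $\widetilde V$, let $V$ be the unique solution of $LV=0$ with $V=f$ on $\p\Omega$. The same argument shows $e^{i\tilde\varphi}V=\widetilde V$.

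\emph{Adjoint statement.} The solutions of $L^*V_0=0$ are exactly $V_0=v_0/(\sigma+qu_0)$ with $L^*_{A,Q}v_0=0$. Similarly, the solutions of $\widetilde L^*\widetilde V_0=0$ are $\widetilde V_0=\tilde v_0/(\tilde\sigma+q\tilde u_0)$ with $L^*_{\wt A,\wt Q}\tilde v_0=0$. Using \eqref{eq:adjoint_operator} and the gauge identity with $-\overline{\tilde\varphi}$, we have already seen that $L^*_{\wt A,\wt Q}(e^{i\overline{\tilde\varphi}}v_0)=0$. Since $\tilde\varphi$ is purely imaginary, $e^{i\overline{\tilde\varphi}}=e^{-i\tilde\varphi}$. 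Hence
\[
\widetilde V_0=\frac{e^{-i\tilde\varphi}(\sigma+qu_0)V_0}{\tilde\sigma+q\tilde u_0}
\]
solves $\widetilde L^*\widetilde V_0=0$. The converse again follows from uniqueness of the adjoint Dirichlet problem applied to the difference of two solutions.

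\emph{Main obstacle: boundary data.} On $\p\Omega$ the transformed function equals $\frac{\sigma+qu_0}{\tilde\sigma+q\tilde u_0}f$, so matching the data requires $\sigma+qu_0=\tilde\sigma+q\tilde u_0$ on $\p\Omega$. I would get this from the boundary determination result of Section \ref{boundary_determination}. There, equality of the linearized DN maps determines the leading coefficient $\sigma+qu_0$ on $\p\Omega$, since $u_0=\tilde u_0=f_0$ there and $\sigma=\tilde\sigma$ on $\p\Omega$. If only $A=\wt A$ on $\p\Omega$ were available, I would instead state the adjoint correspondence with boundary data rescaled by this ratio, which is harmless for the later integral identity.

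A second point is uniqueness for the adjoint Dirichlet problem, where $L^*$ has no zeroth order term. Here the maximum principle applies directly, since a zero zeroth-order coefficient satisfies the sign condition trivially. So this step is easier than the one for $L$.
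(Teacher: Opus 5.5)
Your proposal is correct and follows the paper's argument. It uses the gauge identity for $L_{A,Q}$ with $\widetilde A=A-\nabla\tilde\varphi$, $Q=\widetilde Q$ and $\tilde\varphi|_{\partial\Omega}=0$, uniqueness for the Dirichlet problem via the maximum principle, and the conversion $v_0=(\sigma+qu_0)V_0$, $\tilde v_0=(\tilde\sigma+q\tilde u_0)\widetilde V_0$ together with $e^{i\overline{\tilde\varphi}}=e^{-i\tilde\varphi}$. Your explicit check that $\sigma+qu_0=\tilde\sigma+q\tilde u_0$ on $\partial\Omega$, which makes $V_0$ and $\widetilde V_0$ carry the same data $f$, is a step the paper uses only implicitly.
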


\section{Boundary determination}\label{boundary_determination}
Let us assume $\Lambda_{\sigma,F}=\Lambda_{\tilde \sigma,\widetilde F}$. By writing the corresponding linearized equations in the magnetic Shr\"odinger form \eqref{eq:maganetic_schrodinger}, we have by \cite[Theorem C]{nakamura1995global}, that 
\[
Q=\widetilde Q 
\]
to infinite order on $\p \Omega$, and by 
\cite[Theorem D]{nakamura1995global} that 
\begin{equation}\label{eq:A_on_bndr}
A=\widetilde A
\end{equation}
to zeroth order on $\p \Omega$. By the DN map, we know 
\[
u_0=\tilde u_0
\]
to first order on the boundary.  
We will next recover $\sigma$ to first order from the latter two equations using the following. 
\begin{remark}\label{rem:Runge}
Let us consider replacing $u_0$ by 
\[
\check {u}_0 = u_0 + \eps V + \mathcal{O}_{C^{2,\alpha}(\Omega)}(\eps^2),
\]
for $\eps > 0$ small, where $V$ solves the linearized equation $LV = 0$ of \eqref{eq:quasilin_equation}. Such an expansion holds by the well-posedness result in Theorem \ref{Well-posedness}.

Let $x_0 \in \partial \Omega$. We first extend the coefficients of the linearization $L$ of \eqref{eq:quasilin_equation} at $u_0$ to a slightly larger domain $\widetilde \Omega$ with $\Omega \subset \subset \widetilde \Omega$. By \cite[Theorem I.5.4.1]{bers1964partial}, there exists a solution $V_{\text{loc}}$ to $LV_{\text{loc}} = 0$ in a ball $B \subset \widetilde \Omega$ with prescribed value and gradient at $x_0$. By the Runge approximation property (see e.g. \cite[Proposition A.6]{LLS2020poisson}), there exists a solution $V$ to $LV = 0$ in $\Omega$ that is arbitrarily close to $V_{\text{loc}}$ on $B$ in the $C^1$ norm. By this construction, together with the linearity of $L$, we may find a solution to $LV = 0$ in $\Omega$ such that
\begin{equation}\label{eq:choosing_V}
V(x_0) = 0, \qquad \nabla V(x_0) = V_0,
\end{equation}
where $V_0$ is arbitrarily prescribed.
\end{remark}
Now, we have that 
\[
A = -\frac{i}{2}\frac{\nabla(\sigma + q u_0) + q \nabla u_0}{\sigma + q u_0}.
\]
This can be rewritten as
\[
\nabla \sigma - 2i \sigma A = 2iq u_0 A - 2q \nabla u_0 - u_0 \nabla q.
\]
Similarly, for $\tilde \sigma$ and $\widetilde A$,
\[
\nabla \tilde{\sigma} - 2i \tilde{\sigma} \widetilde{A}
= 2iq u_0 \widetilde{A} - 2q \nabla u_0 - u_0 \nabla q.
\]
Subtracting the two equations and using $A = \widetilde{A}$, we obtain for $\hat{\sigma} := \sigma - \tilde{\sigma}$,
\begin{equation}\label{eq:without_hat}
\nabla \hat{\sigma} - 2i \hat{\sigma} A = 0.
\end{equation}
Here $A$ depends on $u_0$, and the equation holds for any $u_0$ such that $u_0|_{\partial \Omega}$ is in the domain of $\Lambda_{\sigma,F}=\Lambda_{\tilde \sigma,\widetilde F}$.

Let $x_0 \in \partial \Omega$. Next, we replace $u_0$ by $\check u_0 = u_0 + \varepsilon V + \mathcal{O}_{C^{2,\alpha}(\Omega)}(\varepsilon^2)$, where $V$ satisfies $V(x_0)=0$, $\nabla V(x_0) = V_0$ (with $V_0$ arbitrary), and $LV=0$. Such a $V$ exists by the construction in Remark \ref{rem:Runge}. Then the corresponding vector field $\check A$
\[
\check{A}
= -\frac{i}{2}
\frac{\nabla \sigma + 2q \nabla (u_0 + \eps V + \mathcal{O}_{C^{2,\alpha}(\Omega)}(\eps^2)) + (u_0 + \varepsilon V)\nabla q}
{\sigma + q(u_0 + \eps V + \mathcal{O}_{C^{2,\alpha}(\Omega)}(\eps^2))}.
\]
satisfies, at $x_0$,
\[
\check A(x_0) = A(x_0) - \frac{i\varepsilon q(x_0)}{D_0} \nabla V(x_0) + \mathcal{O}(\varepsilon^2),
\]
where $D_0 = \sigma(x_0) + q(x_0)u_0(x_0)$. Since $q \neq 0$, we may choose $\nabla V(x_0)$ so that $\check A(x_0) \neq A(x_0)$ for sufficiently small $\varepsilon$.

Applying the algebraic identity \eqref{eq:without_hat} to $\check A$ reads
\begin{equation}\label{eq:with_hat}
\nabla \hat{\sigma} - 2i \hat{\sigma} \check A = 0.
\end{equation}
Evaluating \eqref{eq:without_hat} and \eqref{eq:with_hat} at $x_0$ and subtracting gives
\[
-2i (A(x_0) - \check A(x_0)) \hat\sigma(x_0) = 0.
\]
Since $A(x_0) \neq \check A(x_0)$, we conclude $\hat\sigma(x_0) = 0$. Substituting back yields $\nabla \hat\sigma(x_0) = 0$. As $x_0 \in \partial \Omega$ was arbitrary, we obtain
\[
\sigma = \tilde{\sigma}, \qquad \nabla\sigma = \nabla\tilde{\sigma} \quad \text{on } \partial\Omega.
\]

We also remark that since $\sigma$ is now known up to first order on $\partial \Omega$, and the Cauchy data for $u_0$ is determined by the boundary value $f_0$ together with the Dirichlet-to-Neumann map, it follows that the Cauchy data for $\sigma + q u_0$ is also known on $\partial \Omega$.

\section{Recovering \texorpdfstring{$\widetilde \varphi$}{phi~} from second linearization}\label{sec:recovering_phi}
We continue the proof of the main theorem by showing that the gauge invariance $\nabla \tilde \varphi$ appearing in the recovery of the coefficients of the first linearized equation can be eliminated considering the second linearization. As mentioned, although we have already obtained \eqref{A-equation} and \eqref{Q-equation}, these identities alone are insufficient due to \eqref{eq:A_tildeA}.

To eliminate this gauge invariance, we analyze the second integral identity
\begin{equation}\label{eq:2nd_lin_integral_id_substracted*}
0 = \int_\Omega V_0 \big( \nabla \cdot (q V_1 \nabla V_2) + \nabla \cdot (q V_2 \nabla V_1) \big)
- \int_\Omega \widetilde V_0 \big( \nabla \cdot (q \widetilde V_1 \nabla \widetilde V_2) + \nabla \cdot (q \widetilde V_2 \nabla \widetilde V_1) \big),
\end{equation}
which follows from \eqref{eq:2nd_lin_integral_id_substracted} together with the boundary determination results in Section \ref{boundary_determination}. Here
\[
L^* V_0 = 0 \quad \text{and} \quad \widetilde L^* \widetilde V_0 = 0.
\]
Note that if $v_0$ solves $L_{A,Q}^* v_0 = 0$, then $(\sigma + q u_0)^{-1} v_0$ solves $L^* V_0 = 0$, and analogously for $\widetilde V_0$. Additionally, $L V_j = 0$ and $\widetilde L \widetilde V_j = 0$ for $j = 1,2$.

We begin by obtaining an equation relating
\[
\Theta := \sigma + q u_0 \quad \text{and} \quad \widetilde \Theta := \tilde \sigma + q \tilde u_0
\]
to $\tilde \varphi$. To this end, observe that \eqref{eq:2nd_lin_integral_id_substracted*} can be rewritten as
\begin{equation}
0 = \int_\Omega V_0 \, \nabla \cdot (q \nabla (V_1 V_2)) - \int_\Omega \widetilde V_0 \, \nabla \cdot (q \nabla (\widetilde V_1 \widetilde V_2)).
\end{equation}
Define
\[
A(x) = e^{2i \tilde \varphi(x)} \quad \text{and} \quad B(x) = \frac{\Theta(x)}{\widetilde \Theta(x)} e^{-i \tilde \varphi(x)}.
\]
With this notation, Lemma \ref{eq:non-linear_well_posedness_eps_dependent3} together with integration by parts yields
\[
\int_\Omega V_1 V_2 \left[ \nabla \cdot (q \nabla V_0) - A \, \nabla \cdot (q \nabla (B V_0)) \right] = 0.
\]
Keeping $V_0$ fixed and choosing $V_1$, $V_2$ as CGO solutions of $L_{A,Q} V = 0$, the density of products of CGOs implies
\begin{equation}\label{eq:AB_eq}
\nabla \cdot (q \nabla V_0) - A \, \nabla \cdot (q \nabla (B V_0)) = 0.
\end{equation}
For $n \geq 3$, this density result is proved in \cite[Section 4]{nakamura1995global}. The statement is also well known in dimension $2$ and can be inferred from \cite{GT2011}, but for completeness, we have included a short proof in the Appendix (see Lemma \ref{lem:density_CGOs_2D}). 
Here, $V_0$ may be taken as any solution of $L^* V_0 = 0$.

Since $L^*$ has no zeroth order term, we may first choose $V_0\equiv 1$. This yields (after dividing by $A$)
\[
\nabla\cdot (q\nabla B)=0.
\]
Since $B=1$ on $\partial \Omega$, uniqueness for the Dirichlet problem yields
\[
B(x)=1 \text{ in } \Omega. 
\]

Next, using $B\equiv 1$, Equation \eqref{eq:AB_eq} becomes
\[
(1-A)\nabla\cdot (q\nabla V_0)=0.
\]
Thus, if we can choose $V_0$ such that the condition $\nabla\cdot (q\nabla V_0)\neq 0$ holds at a given point, then we have that $A=1$, equivalently $\tilde\varphi=0$, at that point. We show next that specific CGO solutions to $L^*V_0=0$ are sufficient for this purpose.

\subsection{CGO solutions for $L^*V_0=0$}
We show that a function $V_0$ satisfying

\[
\nabla\cdot (q\nabla V_0) \neq 0 
\]
can be obtained by taking $V_0$ to be a CGO solution under the condition
\[
\nabla(\sigma/q) \neq 0, \quad \forall x \in \overline{\Omega}. 
\]
CGOs for the magnetic Schr\"odinger equation were constructed in \cite[Section 3]{nakamura1995global}. We note that the CGO construction there also holds verbatim in dimension $2$. These are solutions to
\begin{equation}\label{eq:magnetic_schrodinger}
NU := -\sum_{j=1}^{n} (\partial_{x_j} + i B_j)^2 U + K U = 0,
\end{equation}
of the form
\[
U = e^{\zeta \cdot x + \phi_\zeta} (1 + r_\zeta),
\]
where $\zeta \in \mathbb C^n$ satisfies
\[
\zeta \cdot \zeta = 0 
\]
and $\phi_\zeta$ satisfies
\[
\zeta \cdot \nabla \phi_\zeta = -i \zeta \cdot B.
\]

Since we assume that our coefficients $\sigma,q,F$ and $f_0$ are $C^\infty$ smooth, a standard argument differentiating the equation $N(e^{\zeta\cdot x+\phi_\zeta}(1+r_\zeta))=0$, using Carleman estimates and Sobolev embedding, shows that
\[
r_\zeta=O_{C^2(\Omega)}(1)
\]
as $\abs{\zeta}\to \infty$. 

We first write the equation $L^*V_0=0$ for $V_0$ in the magnetic Shr\"odinger form used in \cite{nakamura1995global}. From \eqref{eq:Lstar}, we have
\[
L^*V_0=\nabla\cdot (\sigma+qu_0)\nabla V_0-q\nabla u_0\cdot \nabla V_0=\nabla\sigma\cdot \nabla V+u_0\nabla q\cdot \nabla V_0+(\sigma+qu_0)\Delta V_0=0. 
\]
This is equivalent to $V_0$ satifying
\[
-\Delta V_0-\frac{\nabla \sigma+u_0\nabla q}{\sigma+qu_0}\cdot \nabla V_0=-\Delta V_0+Z\cdot \nabla V_0=0,
\]
where 
\begin{equation}\label{eq:Z_formula}
Z=-\frac{\nabla \sigma+u_0\nabla q}{\sigma+qu_0}.
\end{equation}
Thus, we obtain that $V_0$ is a solution to (see Section \ref{sec:sol_to_first_lin})
\[
NV_0=0,
\]
where
\[
B=\frac{iZ}{2} \text{ and } K=-\frac 12 \nabla \cdot Z+\frac 14 Z\cdot Z. 
\]

We have 
\[
    \nabla\cdot (q\nabla V_0)=\nabla\cdot (q\nabla (e^{\zeta\cdot x+\phi_\zeta}(1+r_\zeta)))=\nabla\cdot (q\nabla (e^{\zeta\cdot x+\phi_\zeta}))+e^{\zeta\cdot x+\phi_\zeta}O_{C^0}(1).
\]
We denote $\phi=\phi_\zeta$ and compute
\begin{multline}
\nabla\cdot (q\nabla (e^{\zeta\cdot x+\phi}))=\nabla\cdot\big( q(\zeta+\nabla \phi)e^{\zeta\cdot x+\phi})\big)=q\nabla\cdot \big((\zeta+\nabla \phi)e^{\zeta\cdot x+\phi}\big)+\nabla q \cdot (\zeta+\nabla \phi)e^{\zeta\cdot x+\phi} \\
=q\Delta \phi e^{\zeta\cdot x+\phi}+q (\zeta+\nabla \phi)\cdot (\zeta+\nabla \phi) e^{\zeta\cdot x+\phi}+\nabla q \cdot (\zeta+\nabla \phi)e^{\zeta\cdot x+\phi}.
\end{multline}
We have
\[
(\zeta+\nabla \phi)\cdot (\zeta+\nabla \phi)=2\zeta \cdot \nabla \phi+\nabla\phi\cdot \nabla\phi
\]
since $\zeta \cdot \zeta=0$. It follows that 
\[
\nabla\cdot (q\nabla V_0)\neq 0 
\]
if and only if 
\[
q\Delta \phi +2q \zeta \cdot \nabla \phi+\nabla\phi\cdot \nabla\phi +\nabla q \cdot (\zeta+\nabla \phi)+O_{C^0}(1)\neq 0.
\]
We divide this equation by $\abs{\zeta}$ and take $\abs{\zeta}\to \infty$ to obtain
\begin{equation}\label{eq:CGO_condition_to_satisfy}
(2q \nabla \phi+\nabla q) \cdot \hat\zeta\neq 0,
\end{equation}
where $\hat \zeta=\zeta/\abs{\zeta}$. 

Next we recall that $\phi$ satisfies
\[
\zeta\cdot \nabla \phi=-i\zeta\cdot B
\]
and
\[
B=\frac{iZ}{2}.
\]
Using these in \eqref{eq:CGO_condition_to_satisfy} yields the condition
\[
(q Z+\nabla q) \cdot \hat\zeta\neq 0.
\]
Thus we are done if $q Z+\nabla q\neq 0$. By the formula \eqref{eq:Z_formula} for $Z$, this is equivalent to 
\[
(\sigma+qu_0)\nabla q -q(\nabla \sigma +u_0\nabla q)\neq 0,
\]
but this is exactly the structure condition
\begin{equation}\label{eq:strc_condition_proof}
\nabla(\sigma/q) \neq 0.
\end{equation}

It follows that if $x_0\in \Omega$ is fixed, we may choose $\zeta=\zeta_{x_0}$ and corresponding CGO $V_0$ such that
\[
\nabla\cdot (q \nabla V_0)(x_0)\neq 0.
\]
Repeating the argument for all $x_0\in \Omega$ shows that
\[
A\equiv 1,
\]
or equivalently 
\begin{equation}\label{eq:varphi_recovery}
\tilde \varphi\equiv 0
\end{equation}
since by Section \ref{boundary_determination}, $\tilde\varphi$ vanishes on the boundary.

\begin{remark}
A potential alternative approach to constructing solutions $V_0$ satisfying $L^*V_0=0$ together with the condition $\nabla\cdot (q\nabla V_0)=1$ holding at a given point is via Runge approximation. Specifically, one expects that the values of $\nabla^2 V_0$, $\nabla V_0$, and $V_0$ itself can be prescribed arbitrarily at a point, subject only to the constraint that $L^*V_0=0$ holds at that point.

Setting $a=\Delta V_0(x_0)$ and $\eta=\nabla V_0(x_0)$ at a point $x_0\in \Omega$, the desired conditions 
\[
\nabla\cdot (q\nabla V_0)(x_0)=1 \quad\text{and}\quad L^*V_0(x_0)=0
\]
reduce to the linear system
\begin{align*}
\nabla q\cdot \eta + q a &= 1, \\
\nabla\sigma\cdot \eta + \sigma a &= -u_0.
\end{align*}
For $q\neq 0$, this system admits a solution $(a,\eta)$ precisely when
\[
\nabla(\sigma/q) \neq 0, \quad \forall x \in \overline{\Omega},
\]
which coincides with condition \eqref{eq:strc_condition_proof} we obtained via the CGO construction.

While we were unable to locate a direct reference for the required Runge approximation result, we anticipate that such could be proven by following the proof of \cite[Proposition A.5]{LLS2020poisson}. For the lack of reference, we employed CGOs instead.
\end{remark}

\section{Concluding the proof}
In this section we conclude the proof of Theorem \ref{thm:Main theorem}. Let us recall what we have recovered so far. From \eqref{eq:A_tildeA} we have
\begin{equation*} 
A - \widetilde A = \nabla \tilde\varphi \quad \text{in } \Omega,
\end{equation*}
where
\begin{align*} 
A &= -\frac{i}{2}\frac{\nabla(\sigma + q u_0) + q \nabla u_0}{\sigma + q u_0}
   = -\frac{i}{2}\frac{\nabla\Theta + q \nabla u_0}{\Theta},\\[6pt]
\widetilde A &= -\frac{i}{2}\frac{\nabla(\tilde\sigma + q \tilde u_0) + q \nabla \tilde u_0}{\tilde\sigma + q \tilde u_0}
   = -\frac{i}{2}\frac{\nabla\widetilde\Theta + q \nabla \tilde u_0}{\widetilde\Theta}.
\end{align*}
In Section \ref{sec:recovering_phi} we showed $\tilde\varphi = 0$. Thus
\begin{equation}\label{eq:A=tildeA}
A = \widetilde A \quad \text{in } \Omega.
\end{equation}
Taking the divergence of \eqref{eq:A=tildeA} yields
\begin{multline}\label{eq:Term A}
\frac{\Delta(\Theta-\widetilde\Theta)}{\Theta} + q \frac{\Delta (u_0-\tilde u_0)}{\Theta} + \Delta \widetilde\Theta \left(\frac{1}{\Theta}-\frac{1}{\widetilde\Theta}\right) + q\Delta \tilde u_0 \left(\frac{1}{\Theta}-\frac{1}{\widetilde\Theta}\right) \\ + \underbrace{R_1(\Theta, \nabla \Theta,u_0, \nabla u_0) - R_1(\widetilde \Theta, \nabla \widetilde \Theta,\tilde u_0, \nabla \tilde u_0)}_{\hat{R}_1} = 0,
\end{multline}
where $R_1 = R_1(s,\eta,r,\mu)$, with $s,r\in \mathbb{R}$ and $\eta,\mu\in \mathbb{R}^n$, is a rational function in $s$ and a polynomial in its other variables.

We also obtained $Q = \widetilde Q$, see \eqref{eq:Q_recovery}, where
\begin{align*} 
Q &= \frac{\Delta(\sigma+qu_0)-q\Delta u_0}{2(\sigma + qu_0)} 
    - \frac{\nabla q \cdot \nabla u_0}{2(\sigma+qu_0)} 
    + \frac{q^2 |\nabla u_0|^2 - |\nabla (\sigma+qu_0)|^2}{4(\sigma+qu_0)^2},\\[6pt]
\widetilde Q &= \frac{\Delta(\tilde\sigma+q\tilde u_0)-q\Delta \tilde u_0}{2(\tilde \sigma + q\tilde u_0)} 
    - \frac{\nabla q \cdot \nabla \tilde u_0}{2(\tilde\sigma+q\tilde u_0)} 
    + \frac{q^2 |\nabla \tilde u_0|^2 - |\nabla (\tilde\sigma+q \tilde u_0)|^2}{4(\tilde\sigma+q\tilde u_0)^2}.
\end{align*}
Thus we have
\begin{multline}\label{eq: Term Q}
\frac{\Delta(\Theta-\widetilde\Theta)}{2\Theta} - q \frac{\Delta (u_0-\tilde u_0)}{2\Theta} 
+ \frac{\Delta \widetilde\Theta}{2}\left(\frac{1}{\Theta}-\frac{1}{\widetilde \Theta}\right) 
- \frac{q \Delta\tilde u_0}{2}\left(\frac{1}{\Theta}-\frac{1}{\widetilde \Theta}\right) \\ 
+ \underbrace{R_2(\Theta, \nabla \Theta,u_0, \nabla u_0) - R_2(\widetilde \Theta, \nabla \widetilde \Theta,\tilde u_0, \nabla \tilde u_0)}_{\hat{R}_2} = 0,
\end{multline}
where $R_2 = R_2(s,\eta,r,\mu)$ is a rational function in $s$ and a polynomial in its other variables.

Let us denote $\hat{\Theta} := \Theta - \widetilde\Theta$ and $\hat{u}_0 := u_0 - \tilde u_0$. Then \eqref{eq:Term A} and \eqref{eq: Term Q} can be rewritten as
\begin{align} 
\frac{\Delta\hat{\Theta}}{\Theta} + q \frac{\Delta \hat{u}_0}{\Theta} 
+ \underbrace{\Delta \widetilde\Theta \left(\frac{1}{\Theta}-\frac{1}{\widetilde\Theta}\right) 
+ q\Delta \tilde u_0 \left(\frac{1}{\Theta}-\frac{1}{\widetilde \Theta}\right)}_{L_1(1/\Theta)-L_1(1/\widetilde\Theta)=\hat{L}_1} 
+ \hat{R}_1 &= 0 \tag{a}\\[6pt]
\frac{\Delta\hat{\Theta}}{2\Theta} - q \frac{\Delta \hat{u}_0}{2\Theta} 
+ \underbrace{\frac{\Delta \widetilde\Theta}{2}\left(\frac{1}{\Theta}-\frac{1}{\widetilde \Theta}\right) 
- \frac{q \Delta\tilde u_0}{2}\left(\frac{1}{\Theta}-\frac{1}{\widetilde \Theta}\right)}_{L_2(1/\Theta)-L_2(1/\widetilde\Theta)=\hat{L}_2} 
+ \hat{R}_2 &= 0. \tag{b}
\end{align}
Since $\widetilde \Theta, \tilde u_0 \in C^{\infty}(\overline{\Omega})$, we have $\Delta \widetilde\Theta, \Delta \tilde u_0 \in C(\overline{\Omega})$. Consequently, $L_a$ for $a=1,2$ are Lipschitz.

Equations (a) and (b) yield the following coupled elliptic system for $\hat{\Theta}$ and $\hat{u}_0$:
\begin{equation}\label{eq:M_matrix_eq}
M\begin{pmatrix} \Delta \hat{\Theta}\\ \Delta\hat{u}_0 \end{pmatrix}
:= 
\begin{pmatrix}
\frac{1}{\Theta} & \frac{q}{\Theta} \\[4pt]
\frac{1}{2\Theta} & -\frac{q}{2\Theta}
\end{pmatrix}
\begin{pmatrix} \Delta \hat{\Theta}\\ \Delta\hat{u}_0 \end{pmatrix}
=
\begin{pmatrix}
\hat{R}_1 + \hat{L}_1 \\[4pt]
\hat{R}_2 + \hat{L}_2
\end{pmatrix},
\end{equation}
where $\hat{R}_a$ and $\hat{L}_a$ contain at most one derivative of $\Theta$, $\widetilde \Theta$ and $u_0$, $\tilde u_0$. 
Since
\[
\det(M) = -\frac{q}{\Theta^2} \neq 0
\]
by the assumptions \eqref{eq:well-posedness_conditions} and $q \neq 0$, the matrix $M$ is invertible. Therefore, we obtain the elliptic system
\[
\begin{pmatrix} \Delta \hat{\Theta}\\ \Delta\hat{u}_0 \end{pmatrix}
= M^{-1}
\begin{pmatrix} \hat{R}_1 + \hat{L}_1 \\ \hat{R}_2 + \hat{L}_2 \end{pmatrix}
=
\begin{pmatrix}
\frac{\Theta}{2} & \Theta \\[4pt]
\frac{\Theta}{2q} & -\frac{\Theta}{q}
\end{pmatrix}
\begin{pmatrix} \hat{R}_1 + \hat{L}_1 \\ \hat{R}_2 + \hat{L}_2 \end{pmatrix},
\]
which is diagonal in the leading order. This may also be written as
\begin{equation*}
\begin{pmatrix} \Delta \hat{\Theta}\\ \Delta\hat{u}_0 \end{pmatrix}
= L(\Theta, \nabla\Theta, u_0, \nabla u_0) - L(\widetilde \Theta, \nabla\widetilde\Theta, \tilde u_0, \nabla \tilde u_0),
\end{equation*}
where $L$ is a vector-valued Lipschitz function in all its variables.

Since $\hat \Theta$ and $\hat{u}_0$ vanish to first order on the boundary $\partial \Omega$ by Section \ref{boundary_determination}, the unique continuation principle for systems (see e.g. \cite[Theorem B.1]{KMU2011}) implies
\[
(\hat{\Theta}, \hat{u}_0) \equiv (0,0).
\]
Hence,
\[
u_0 = \tilde{u}_0 \ \  \text{ in } \Omega 
\]
Using the identity
\[
\tilde{\sigma} + q\tilde{u}_0 = \widetilde{\Theta} = \Theta = \sigma + q u_0,
\]
it then follows that
\[
\tilde{\sigma} = \sigma \ \  \text{ in } \Omega.
\]
Finally, substituting these identities into equation \eqref{eq:quasilin_equation}, we obtain
\[
\widetilde{F}
= \nabla \cdot \big((\tilde{\sigma}+q\tilde{u}_0)\nabla \tilde{u}_0\big)
= \nabla \cdot \big((\sigma+q u_0)\nabla u_0\big)
= F
\]
holding in $\Omega$. 
This completes the proof of Theorem \ref{thm:Main theorem}.
\hfill $\square$

\section*{Appendix A: Density of CGOs in dimension 2}
In this appendix we recall the density of products of complex geometric optics (CGO) solutions for the magnetic Schr\"odinger equation in dimension $2$.

\begin{lemma}\label{lem:density_CGOs_2D}
Let $f\in C_0(\Omega)$. If
\begin{equation}\label{eq:density_2D}
\int_\Omega f \, \omega_1 \omega_2 = 0
\end{equation}
for all $\omega_1$ and $\omega_2$ satisfying
\begin{equation*}
L_{C,E}\omega := -\sum_{j=1}^{2} (\partial_{x_j} + i C_j)^2 \omega + E\omega = 0,
\end{equation*}
then $f \equiv 0$.
\end{lemma}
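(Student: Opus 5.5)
Since $\Omega\subset\mathbb R^2$ is planar, I would prove Lemma \ref{lem:density_CGOs_2D} with the complex-analytic CGOs of Bukhgeim type used in \cite{GT2011}, rather than the linear-phase CGOs $e^{\zeta\cdot x}$. In two dimensions products of those only give Fourier transforms of $f$ along a set that is too small. Identify $\mathbb R^2\cong\mathbb C$ with $z=x_1+ix_2$. The operator factors as $L_{C,E}=-4\,\overline{\partial}_C\partial_C+(\text{first/zeroth order corrections})$, where $\partial_C=\partial_z+\tfrac i2(C_1-iC_2)$ and $\overline\partial_C=\partial_{\bar z}+\tfrac i2(C_1+iC_2)$. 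Fix a point $z_0\in\Omega$ and the quadratic holomorphic phase $\Phi(z)=(z-z_0)^2$, which has a single nondegenerate critical point at $z_0$. I would then construct solutions
\[
\omega_1=e^{\tau\Phi}\,a_1\,(1+r_{1,\tau}),\qquad \omega_2=e^{-\tau\overline{\Phi}}\,a_2\,(1+r_{2,\tau}),
\]
with $\tau\to\infty$. Here the amplitudes $a_j$ are nowhere vanishing and are chosen to absorb the magnetic potential. Concretely, $a_1=e^{\alpha_1}$ with $\overline\partial\alpha_1=-\tfrac i2(C_1+iC_2)$ solved by the Cauchy transform, and analogously $a_2$ with $\partial$ in place of $\overline\partial$. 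This removes the top-order effect of $C$, exactly as the phase $\phi_\zeta$ does in the higher-dimensional construction of \cite{nakamura1995global}.

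The remainders come from the Carleman estimate with the degenerate (harmonic) weight $\operatorname{Re}\Phi$, as in \cite{GT2011}. I would take these estimates as given, since the paper already relies on that reference. They give
\[
\|r_{j,\tau}\|_{L^2(\Omega)}=o(\tau^{-1/2})
\]
(even $O(\tau^{-1/2-\epsilon})$ for smooth $C,E$). This is obtained by solving $\overline\partial$ and $\partial$ equations weighted by $e^{\pm 2i\tau\operatorname{Im}\Phi}$ and applying stationary phase to the oscillatory Cauchy transforms. This remainder estimate is the main obstacle, because the weight is degenerate at $z_0$. I would handle it by citing the corresponding propositions of \cite{GT2011} verbatim; the only difference is that here the first-order magnetic terms are absorbed into the amplitudes.

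Next, insert these solutions into \eqref{eq:density_2D}. Since $\tau\Phi-\tau\overline\Phi=2i\tau\operatorname{Im}\Phi$, we get
\[
0=\int_\Omega f\,a_1a_2\,e^{2i\tau\operatorname{Im}\Phi}\,dx+\int_\Omega f\,a_1a_2\,e^{2i\tau\operatorname{Im}\Phi}\,(r_{1,\tau}+r_{2,\tau}+r_{1,\tau}r_{2,\tau})\,dx .
\]
Multiply by $\tau$ and let $\tau\to\infty$. By stationary phase at the nondegenerate critical point $z_0$, where the Hessian of $\operatorname{Im}\Phi$ is a fixed invertible matrix, the first term is $\tfrac{\pi}{2\tau}(fa_1a_2)(z_0)+o(\tau^{-1})$. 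Here I would use that $f$ is continuous, approximating it by smooth functions so that the error is uniform. The second term is $o(\tau^{-1})$. This follows from the remainder bounds above together with the improved bound $\|e^{2i\tau\operatorname{Im}\Phi}r_{j,\tau}\|$ tested against smooth functions, which is precisely the $o(\tau^{-1})$ estimate proved in \cite{GT2011}. Hence $(fa_1a_2)(z_0)=0$. Since $a_1a_2\neq0$, we get $f(z_0)=0$, and because $z_0\in\Omega$ was arbitrary, $f\equiv0$. For $f\in C_0(\Omega)$, critical points near $\partial\Omega$ cause no boundary contributions, since $f$ vanishes there. Alternatively, I would avoid boundary terms entirely by extending everything to a larger domain and using cutoffs.
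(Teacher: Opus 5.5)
Your construction and limiting argument are the paper's: Bukhgeim--Guillarmou--Tzou CGOs with phase $\Phi(z)=(z-z_0)^2$ and nonvanishing amplitudes absorbing the magnetic potential, followed by stationary phase at $z_0$, with $f|_{\partial\Omega}=0$ removing boundary contributions. Your transport equation $\overline\partial\alpha_1=-\tfrac i2(C_1+iC_2)$ and the constant $\tfrac{\pi}{2\tau}$ are correct. On one point you are more careful than the paper. The bound $\|r_{j,\tau}\|_{L^2}=O(\tau^{-1/2-\epsilon})$ by itself only makes $\tau\int f a_1a_2e^{2i\tau\operatorname{Im}\Phi}r_{j,\tau}$ of size $O(\tau^{1/2-\epsilon})$, so the tested $o(\tau^{-1})$ remainder estimate from \cite{GT2011} is genuinely needed.

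The step that does not work as written is the passage from smooth to merely continuous $f$. If $f_k\to f$ uniformly, the error $\tau\int (f-f_k)a_1a_2e^{2i\tau\operatorname{Im}\Phi}$ is only bounded by $\tau\|f-f_k\|_{\infty}\|a_1a_2\|_{L^1}$, which is not uniform in $\tau$. No sharper estimate can rescue this. The functionals $g\mapsto\tau\int g\,a_1a_2e^{2i\tau\operatorname{Im}\Phi}$ on $C_0(\Omega)$ have norms exactly $\tau\|a_1a_2\|_{L^1(\Omega)}\to\infty$. By the uniform boundedness principle there are therefore continuous $f$ for which these quantities are unbounded in $\tau$, so the expansion $\tfrac{\pi}{2\tau}(fa_1a_2)(z_0)+o(\tau^{-1})$ fails for them. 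The tested remainder bound you import likewise uses regularity of the function being tested. The paper's proof passes over the same point silently. The simplest repair is to state and prove the lemma for $f$ with some smoothness, e.g.\ $f\in C^1$ vanishing on $\partial\Omega$; the function to which the lemma is applied in Section~\ref{sec:recovering_phi} is smooth.
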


\begin{proof}
In dimension $2$, there exist CGO solutions of the form
\begin{equation*}
\omega_1 = F_A^{-1} e^{\Phi/h} (1 + r_h), \qquad 
\omega_2 = F_{\overline A} e^{-\overline\Phi/h} (1 + \widetilde r_h),
\end{equation*}
where $\Phi$ is holomorphic and Morse, and $r_h, \widetilde r_h = O_{L^2}(h^{1/2+\varepsilon})$ as $h \to 0$, and the functions $F_A$ and $F_{\overline A}$ are nonvanishing. For details, see \cite[Section 5]{liimatainen2025inverse}, which builds on \cite{GT2011}.

Let $z_0\in \Omega\subset \C$. By choosing
\[
\Phi(z) = (z-z_0)^2
\]
and substituting $\omega_1$ and $\omega_2$ into \eqref{eq:density_2D}, an application of the stationary phase method, using $f|_{\p\Omega}=0$, yields $f(z_0)=0$. Varying $z_0$ shows that $f \equiv 0$ in $\Omega$.
\end{proof}

\bibliography{ref} 
				
				\bibliographystyle{alpha}
\end{document}